\DeclareMathOperator{\ex}{ex}
\def\qed{\hfill $\square$}
\def\C{\mathcal{C}}
\def\HH{\mathcal{H}}
\def\eps{\varepsilon}
\let \phi=\varphi
\newcommand{\hm}[1]{\leavevmode{\marginpar{\tiny%
$\hbox to 0mm{\hspace*{-0.5mm}$\leftarrow$\hss}%
\vcenter{\vrule depth 0.1mm height 0.1mm width \the\marginparwidth}%
\hbox to 0mm{\hss$\rightarrow$\hspace*{-0.5mm}}$\\\relax\raggedright #1}}}
\begin{document}
\allowdisplaybreaks[2]

\newtheorem{theorem}{Theorem}[section]
\newtheorem{cor}[theorem]{Corollary}
\newtheorem{lemma}[theorem]{Lemma}
\newtheorem{fact}[theorem]{Fact}
\newtheorem{property}[theorem]{Property}
\newtheorem{corollary}[theorem]{Corollary}
\newtheorem{proposition}[theorem]{Proposition}
\newtheorem{claim}[theorem]{Claim}
\newtheorem{conjecture}[theorem]{Conjecture}
\newtheorem{definition}[theorem]{Definition}
\theoremstyle{definition}
\newtheorem{example}[theorem]{Example}
\newtheorem{remark}[theorem]{Remark}

\title{Maximum number of edge colorings avoiding rainbow copies of $K_4$}
\author[H. H\`{a}n]{Hi\d{\^e}p H\`{a}n}
\address{Departamento de Matem\'{a}tica y Ciencia de la Computaci\'{o}n, Universidad de Santiago de Chile --  Las Sophoras, 173, Estaci\'{o}n Central, Santiago, Chile}
\email{hiep.han@usach.cl}

\author[C. Hoppen]{Carlos Hoppen}
\address{Instituto de Matem\'{a}tica e Estat\'{i}stica, Universidade Federal do Rio Grande do Sul -- Avenida Bento Gon\c{c}alves, 9500, 91501--970 Porto Alegre, RS, Brazil}
\email{choppen@ufrgs.br}

\author[N. M. M\"{u}ller]{Nicolas Moro M\"{u}ller}
\address{Instituto Federal do Rio Grande do Sul, Campus Caxias do Sul -- Rua Avelino Ant\^{o}nio de Souza, 1730, 95043-700, Caxias do Sul, RS, Brazil}
\email{nicolas.muller@caxias.ifrs.edu.br}

\author[D. R. Schmidt ]{Dionatan Ricardo Schmidt}
\address{Universidade Federal do Pampa, Campus Alegrete -- Avenida Tiaraju, 810, 97546--550, Alegrete, RS, Brazil}
\email{dionatanschmidt@unipampa.edu.br}

\thanks{C.~Hoppen was partially supported by CNPq (315132/2021-3 and 408180/2023-4) and FAPERGS (21/2551-0002053-9).}
\thanks{Hi\d{\^e}p H\`an was supported by the ANID Regular grant 1231599 and by ANID Basal Grant CMM FB210005.}
%\thanks{Some preliminary results of this paper appeared as an extended abstract in the Proceedings of The Eight European Conference on Combinatorics, Graph Theory and Applications, EuroComb 2015 \cite{eurocomb15}}

\begin{abstract}
In this paper we show that for $r\geq 12$ and any sufficiently large $n$-vertex graph $G$ the number of $r$-edge-colorings of $G$ with no rainbow $K_4$ is at most $r^{\ex(n,K_4)}$, where $\ex(n,K_4)$ denotes the Tur\'{a}n 
number of $K_4$. Moreover, $G$ attains equality  if and only if it is the Tur\'{a}n graph $T_3(n)$. 

The bound on the number of colors $r\geq 12$ is best possible. It improves upon a result of H. Lefmann,  D.A. Nolibos, and the second author who showed the same result for $r \geq 5434$
and it confirms a conjecture by
Gupta, Pehova, Powierski and Staden.

%Lefmann, Nolibos and one of the current authors [C. Hoppen, H. Lefmann, and D.A. Nolibos, \emph{An extension of the rainbow Erd\H{o}s-Rothschild problem}, Discrete Mathematics {\bf 344(8)} (2021), 112443] have proved that the following holds for all $r \geq 5434$. There exists $n_0$ such that, for all $n \geq n_0$, the following holds for any $n$-vertex graph $G$. The number of $r$-edge-colorings of $G$ with no rainbow copy of $K_4$ is at most $r^{\ex(n,K_4)}$, where $\ex(n,K_4)$ denotes the (Tur\'{a}n) extremal number of $K_4$. Moreover, a graph $G$ attains this upper bound on the number of $r$-colorings with no rainbow copy of $K_4$ if and only if it is the Tur\'{a}n graph $T_3(n)$. Gupta, Pehova, Powierski and Staden [P. Gupta, Y. Pehova, E. Powierski, and K. Staden, \emph{A framework for the generalised Erd\H{o}s-Rothschild problem and a resolution of the dichromatic triangle case}, arXiv 25032.12291 (2025), 57pp] conjectured that the same could be proved for all $r \geq 12$, which is best possible. We prove this conjecture.
\end{abstract}

\maketitle

\section{Introduction}
Given a graph $G$ and integers $r,k \geq 3$, an $r$-coloring $\phi\colon E(G)\to[r]$ of the edges of $G$ is  \emph{free of rainbow $K_k$} if there is no copy of the clique $K_k$ in $G$ whose edges are colored
by pairwise distinct colors. If $k=3$, these $r$-colorings are known as \emph{Gallai colorings}~\cite{GS04}. For general $k$, the number of such $r$-colorings of $G$ is denoted by $\rho_{r,k}(G)$  and in this paper we 
study  $\rho_{r,k}(n)=\max \{\rho_{r,k}(G)\colon {G\subset K_n}\}$, the maximum number of rainbow-$K_k$-free colorings an $n$-vertex graph can have. Moreover,
 we are interested in the structure of the graphs which attain this maximum. 
 This is a natural and popular variant of the classical Erd\H{o}s-Rothschild problem, which studies the same problem for colorings with no monochromatic $K_k$, see below for more details.

If $G$ itself is $K_k$-free, then $\rho_{r,k}(n)\geq \rho_{r,k}(G)=r^{e(G)}$ and thus a straightforward lower bound for $\rho_{r,k}(n)$ is obtained by 
considering the largest $K_k$-free graph, which was determined by Tur\'an's theorem~\cite{turan} 
to be the complete $(k-1)$-partite graph with partition classes as equal as possible. This commonly called \emph{Tur\'an graph} and its number of edges are denoted by $T_{k-1}(n)$ and $\ex(n,K_k)$, respectively.
Then 
\begin{equation}\label{eq:bounds}
\rho_{r,k}(n)\geq \rho_{r,k}(T_{k-1}(n))= r^{\ex(n,K_k)},
\end{equation} 
and computing the value of $\rho_{r,k}(n)$ was first posed as a problem in~\cite{BHS17}.
In~~\cite{rainbow_kn} Lefmann, Odermann, and the second author showed that the inequality in~\eqref{eq:bounds} holds with equality if the number of colors is sufficiently large.
\begin{theorem}\cite[Theorem 1.2]{rainbow_kn}  \label{thm_Kk} 
For every $k \geq 3$ and $r \geq \binom{k}{2}^{8k-4}$ there is $n_0$ such that every graph of order $n > n_0$ has at most $r^{\ex(n,K_{k})}$ rainbow-$K_{k}$-free $r$-colorings. Moreover, the Tur\'{a}n graph $T_{k-1}(n)$ is the only graph on $n$ vertices which achieves equality.
\end{theorem}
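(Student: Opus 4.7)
The plan is to combine Szemer\'{e}di's regularity lemma with an extremal analysis of an auxiliary multicoloured reduced graph and to conclude via the Erd\H{o}s--Simonovits stability theorem. First, apply the regularity lemma to $G$ with constants $\eps\ll d\ll 1/k$ to obtain a partition $V_0,V_1,\dots,V_\ell$. Given any rainbow-$K_k$-free coloring $\phi\colon E(G)\to [r]$, call a colour $c$ \emph{significant} on a regular pair $(V_i,V_j)$ of density at least $d$ if it is used on at least $\delta |V_i||V_j|$ edges of the pair, and let $C_{ij}\subseteq [r]$ be the set of significant colours on $(V_i,V_j)$. Edges lying in irregular or low-density pairs, together with the non-significant colours on the dense regular pairs, contribute only a factor of the form $o(1)^{n^2}\cdot r^{n^2}$ to the total count, so the problem reduces to bounding the number of list-colourings compatible with the system $\{C_{ij}\}$.

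Build the \emph{coloured reduced graph} $R$ on vertex set $\{1,\dots,\ell\}$ whose edges are the dense regular pairs, with edge $ij$ labelled by $C_{ij}$. The central structural step is an embedding lemma: if $R$ contains a copy of $K_k$ on $\{i_1,\dots,i_k\}$ admitting pairwise distinct representatives $c_{st}\in C_{i_s i_t}$ for $s<t$, then the counting lemma applied to this system yields a genuine rainbow $K_k$ in $G$, contradicting the choice of $\phi$. Consequently, the colour lists on every $K_k$ in $R$ must admit no system of distinct representatives, which in particular forces $\sum_{s<t}(|C_{i_s i_t}|-1)\leq \binom{k}{2}-1$ for every such clique.

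Next, translate the structural restriction into a product bound: the number of colourings of $G$ compatible with $\{C_{ij}\}$ is at most $\prod_{ij\in E(R)}|C_{ij}|^{|V_i||V_j|}$ up to controlled error. An averaging and optimisation argument, combined with the SDR obstruction above, shows that the product is maximised precisely when $R$ itself is $K_k$-free and every list has size $r$. Tur\'{a}n's theorem then gives $e(R)\leq \ex(\ell,K_k)$, and unwinding the partition sizes yields the bound $\rho_{r,k}(G)\leq r^{\ex(n,K_k)(1+o(1))}$. To sharpen this to the exact bound $r^{\ex(n,K_k)}$ and identify the extremiser, invoke Erd\H{o}s--Simonovits stability: $R$ must lie within $o(\ell^2)$ edges of $T_{k-1}(\ell)$, and then a local-swap argument — removing an edge inside a would-be Tur\'{a}n class, or adding a missing edge between classes — shows that any deviation from $T_{k-1}(n)$ strictly decreases the colouring count.

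The main obstacle is the optimisation in the counting step: one must control the trade-off between enlarging an individual list $C_{ij}$, which boosts the factor on edge $ij$, and creating additional copies of $K_k$ in $R$, which constrains the lists along those cliques. The large-$r$ hypothesis $r\geq\binom{k}{2}^{8k-4}$ enters exactly here; it is chosen so that the colouring cost of generating even a single cluster-$K_k$ strictly outweighs any conceivable gain from enlarging the associated lists, thereby forcing the Tur\'{a}n structure to emerge as the unique maximiser and absorbing the lower-order error terms coming from the regularity approximation.
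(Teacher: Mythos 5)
The regularity-plus-stability architecture you describe is indeed the one used in the cited source \cite[Theorem 1.2]{rainbow_kn} (note that the present paper does not prove Theorem~\ref{thm_Kk}; it imports it), so at the level of strategy your proposal matches the original proof: cleaning via the regularity lemma, passing to a coloured reduced graph with lists of significant colours, deriving a ``no rainbow clique in the reduced graph'' constraint from the embedding/counting lemma, optimising the resulting product over lists, and finishing with Erd\H{o}s--Simonovits stability and a swap argument.

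There is, however, a concrete error in the structural step. You claim that the absence of a system of distinct representatives among the lists $C_{i_si_t}$ on a clique of $R$ ``in particular forces $\sum_{s<t}(|C_{i_si_t}|-1)\leq \binom{k}{2}-1$.'' This is false. By Hall's theorem, ``no SDR'' only guarantees that some subcollection $S$ of the $\binom{k}{2}$ lists has union of size smaller than $|S|$; it places no bound on $\sum|C_e|$. For instance, with $k=3$ and $C_1=C_2=C_3=\{1,2\}$ there is no SDR, yet $\sum(|C_e|-1)=3>2$. More generally, taking every list on a clique equal to $[\binom{k}{2}-1]$ blocks an SDR while making $\sum|C_e|=\binom{k}{2}\bigl(\binom{k}{2}-1\bigr)$, far above the bound you assert. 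The quantity one actually needs to control is the product $\prod_e|C_e|$ subject to the Hall violation, and the heart of the optimisation is to show that for $r\geq\binom{k}{2}^{8k-4}$ the best a clique in $R$ can do under this constraint (roughly $(\binom{k}{2}-1)^{\binom{k}{2}}$ per cluster-clique) is strictly worse than deleting an edge and enlarging lists to $[r]$. Your high-level remark about why the large-$r$ hypothesis enters is correct, but as written the proposal substitutes a wrong inequality for the genuine constraint and does not carry out the comparison it would have to rest on; this is the gap that needs to be repaired before the optimisation and subsequent stability step can go through.
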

As another lower bound on $\rho_{r,k}(n)$ note that $\big(\binom k2-1\big)$-colorings of $K_n$ contain no rainbow copies of~$K_k$. For $r$ sufficiently small compared to $k$ we have  $\binom k2-1>r^{\left(1-\frac1{k-1}\right)}$ 
and thus  $\rho_{r,k}(K_n)>\rho_{r,k}(T_{k-1}(n))$.
%the complete graph $K_n$ admits more rainbow-$K_k$-free $r$-colorings than $T_{k-1}(n)$ for this range of $r$.
Hence, while the statement of  Theorem~\ref{thm_Kk} is expected to hold for much smaller values of $r$, it is not true for arbitrary~$r$, and determining the smallest~$r$ for which the statement  holds is an interesting and challenging problem.  

For triangles, i.e.~$k=3$, this problem has been addressed and the bound in Theorem~\ref{thm_Kk} has been improved by the same  authors in~\cite{rainbow_triangle}  to $r \geq 5$. Later
this was lowered by Balogh and Li~\cite{BL19}  to $r\geq 4$ which is best possible.  %
Regarding the general case, Lefmann, Nolibos and second author~\cite{multipattern} have improved the  bound in Theorem~\ref{thm_Kk} to $r \geq (k^2/4)^{4k}$ for all $k\geq 4$ and for $k=4$, they 
obtained the  bound $r \geq 5434$. This seemed to be still far from the correct value and very recently, Gupta, Pehova, Powierski, and Staden~\cite[Problem 8.3]{GPPS25+} conjectured that the 
correct bound should be $r \geq 12$, 
which would be best possible. %as $\rho_{r,4}(K_n) > \rho_{r,4}(T_{3}(n))$ when $2 \leq r \leq 11$. % and $n$ is sufficiently large. 
Our main result is a proof of this conjecture.
\begin{theorem}  \label{thm_main} 
For every integer $r \geq 12$, there is $n_0$ such that every graph of order $n > n_0$ has at most $r^{\ex(n,K_{4})}$ rainbow-$K_4$-free $r$-colorings. Moreover, the Tur\'{a}n graph $T_3(n)$ is the only graph on $n$ vertices for which equality is achieved.
\end{theorem}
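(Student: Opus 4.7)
The plan is to follow the regularity-plus-pattern framework developed in~\cite{rainbow_kn, multipattern}, while refining the key palette optimization to achieve the sharp threshold $r = 12$. Given an $n$-vertex graph $G$ and a rainbow-$K_4$-free coloring $\phi \colon E(G) \to [r]$, I would apply Szemer\'{e}di's regularity lemma with parameters $1/t \ll \eps \ll \delta \ll 1/r$ to obtain an $\eps$-regular partition $V_1, \ldots, V_t$ of $V(G)$. To each $\phi$ one associates its \emph{pattern} $T_\phi \colon \binom{[t]}{2} \to 2^{[r]}$, where $T_\phi(ij)$ is the set of colors whose density on $(V_i, V_j)$ exceeds $\delta$. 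Since there are only $2^{r\binom{t}{2}} = O(1)$ patterns and a standard counting argument bounds the number of colorings with $T_\phi = T$ by $r^{o(n^2)} \prod_{ij} |T(ij)|^{e(V_i, V_j)}$, one obtains
\[
\rho_{r,4}(G) \leq r^{o(n^2)} \cdot \max_{T\text{ admissible}} \prod_{ij} |T(ij)|^{e(V_i, V_j)}.
\]
Admissibility here means that for every $4$-subset $\{i,j,k,\ell\}$ of $[t]$ whose six pairs are all dense regular, the palettes $\{T(ab)\}$ admit no system of distinct representatives; equivalently, by Hall's theorem, there exist $s$ of the six pairs whose palettes jointly cover at most $s-1$ colors.

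The heart of the proof is to solve the continuous relaxation
\[
\max \ \sum_{ij} \alpha_i \alpha_j \log |P(ij)| \quad \text{subject to}\ \sum_i \alpha_i = 1,\ \alpha_i \geq 0,\ \text{and}\ P\ \text{admissible}
\]
over palette assignments $P$ on the edges of $K_t$. Two canonical configurations dominate the landscape. The \emph{Tur\'{a}n pattern}, with $P(ij) = [r]$ on a maximum $K_4$-free ($3$-partite) support and $P(ij) = \emptyset$ elsewhere, gives objective $\tfrac13 \log r$. The \emph{small-palette pattern}, with every $P(ij) \subseteq [5]$, is trivially admissible (only $5 < 6$ distinct colors appear overall) and gives at most $\tfrac12 \log 5$. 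These values cross at $r = 5^{3/2} \approx 11.18$, so $r \geq 12$ is \emph{exactly} the regime where the Tur\'{a}n pattern strictly dominates; this explains why $r = 12$ is the correct threshold in the conjecture of~\cite{GPPS25+}.

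The main obstacle is showing that for $r \geq 12$ the Tur\'{a}n pattern is the \emph{unique} global optimum, with a quantitative gap over all other admissible configurations. The earlier bound $r \geq 5434$ of~\cite{multipattern} came from coarser estimates on mixed configurations. To reach $r = 12$, I would classify admissible patterns according to the Hall-defect sizes $s \in \{1,\ldots,6\}$ on each $4$-subset and use a local-exchange/convexity argument to show that any non-Tur\'{a}n feasible $P$ can be strictly improved by shifting mass toward one of the two extreme configurations. Because the small-palette construction is tight at $r = 11$, the analysis leaves essentially no slack: every extra color in a palette of size greater than $5$ must be accounted for precisely against strict $K_4$-freeness of the support on the relevant $4$-subsets.

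Uniqueness of the optimum yields a stability conclusion: any graph $G$ with $\rho_{r,4}(G)$ within an $r^{o(n^2)}$-factor of $r^{\ex(n,K_4)}$ must be at edit distance $o(n^2)$ from $T_3(n)$. Exact extremality then follows by the standard local-switching argument used in~\cite{rainbow_kn}: if $G \neq T_3(n)$ is close to $T_3(n)$, some single edge flip (addition or removal) can be shown to strictly increase $\rho_{r,4}$, contradicting maximality.
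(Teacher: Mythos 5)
Your proposal uses the classical route — Szemer\'edi's regularity lemma plus optimization over color patterns on the reduced graph — which is precisely the framework of~\cite{rainbow_kn,multipattern}. The paper instead follows Balogh--Li~\cite{BL19} and builds on the multi-color hypergraph container theorem: one takes the $6$-uniform hypergraph on $E(G)\times[r]$ whose edges are the rainbow $K_4$'s of the ``complete'' template, applies Theorem~\ref{container_ST}, and works directly with the resulting small family of templates, cleaning each template by two iterative deletion operations (removing low-degree vertices and vertices of non-critical triangles) until the remaining graph and lists are structured enough to apply supersaturation and the F\"uredi-type stability result. These are genuinely different methods, and the container route avoids the pattern-optimization problem entirely.

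That said, the proposal has a real gap exactly where the previous regularity-based work got stuck. Your heuristic correctly identifies the two extreme configurations (Tur\'an: $\tfrac13\log r$; all palettes $\subseteq[5]$: $\tfrac12\log 5$) and the crossing at $r=5^{3/2}\approx 11.18$, which explains the conjectured threshold. But proving that for every $r\geq 12$ the Tur\'an pattern is the unique global maximizer of $\sum_{ij}\alpha_i\alpha_j\log|P(ij)|$ over \emph{all} admissible $(\alpha,P)$ is the entire difficulty — this is what~\cite{multipattern} attempted and only pushed to $r\geq 5434$, and what~\cite{GPPS25+} left as a conjecture. The admissibility region is highly non-convex (it is a disjunction of Hall-deficiency conditions over $4$-subsets, and only on $4$-subsets whose six pairs are dense), so ``local-exchange/convexity'' is not a routine step: palettes of intermediate size $6,\ldots,r-1$ can appear on mixed supports, a single vertex may carry full palettes to all others while small palettes elsewhere suppress SDRs, and these hybrid configurations are precisely the ones that the coarse estimates of~\cite{multipattern} failed to beat down to $r=12$. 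As written, the proposal asserts that the optimization can be resolved but does not carry it out, and there is no evidence that the classification by Hall-defect sizes plus a convexity argument closes the gap. Without that, the argument only reproduces what was already known rather than reaching the sharp threshold.
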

The proof of the theorem may be split into two steps, the first one proving a stability result and the second one proving the exact result.  
Fortunately, the exact part has already been shown in a more general setting in~\cite{rainbow_kn} and we will make use of it here.
A graph $G$ is called \emph{$t$-close  to being~$k$-partite} if it admits a partition $V(G)=V_1\cup\dots\cup V_k$ such that $\sum_{i\in[k]}e(V_i)\leq t$.
\begin{theorem}\cite[Lemma 4.4]{rainbow_kn} \label{exact} 
Let  $k \geq 3$ and $r>ek$\footnote{Here $e$ stands for Euler's number.}. Suppose that for every $\delta>0$ there is an $n_0$ such that any  graph $G$ on $n\geq n_0$ vertices with $\rho_{r,k}(G)\geq r^{\ex(n,K_k)}$ is $\delta n^2$-close to being
$(k-1)$-partite. Then, for $n$ sufficiently large, we have 
\[\rho_{r,k}(n)=r^{\ex(n,K_k)},\]
and the only graph which achieves the maximum is the Tur\'an graph $T_{k-1}(n)$.
\end{theorem}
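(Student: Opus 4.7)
Let $G$ on $n$ vertices satisfy $\rho_{r,k}(G) \geq r^{\ex(n,K_k)}$; the plan is to deduce that $G = T_{k-1}(n)$. Since $\rho_{r,k}(G) \leq r^{e(G)}$, one may assume $e(G) \geq \ex(n,K_k)$. Applying the stability hypothesis with a sufficiently small $\delta > 0$ yields a partition $V(G) = V_1 \cup \cdots \cup V_{k-1}$ with $b := \sum_i e_G(V_i) \leq \delta n^2$. Replacing this partition by the one minimizing $b$, standard arguments give $|V_j| = (1 \pm o(1))n/(k-1)$, every vertex has $o(n)$ within-class neighbors, and each between-class bipartite graph is $(1-o(1))$-dense. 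Writing $P = \sum_{i<j}|V_i||V_j|$ and letting $m$ denote the number of missing between-class edges, one has $e(G) = P - m + b$ with $P \leq \ex(n,K_k)$.

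If $b = 0$, then $G$ is $(k-1)$-partite and hence $K_k$-free; combined with $e(G) \geq \ex(n,K_k)$, Tur\'an's theorem forces $G = T_{k-1}(n)$. The heart of the proof is the case $b \geq 1$, where one must establish the strict inequality $\rho_{r,k}(G) < r^{\ex(n,K_k)}$. For each bad edge $uv \in V_i$, the density of $G$ ensures that $uv$ lies in at least $(1-o(1))\prod_{j\ne i}|V_j| = \Theta(n^{k-2})$ transversal copies of $K_k$ (those using one vertex from each $V_j$ with $j \neq i$) whose only within-class edge is $uv$. For any such copy $K$, a rainbow-$K_k$-free coloring either assigns two non-$uv$ edges of $K$ a common color, or else must color $uv$ with one of the $\binom{k}{2}-1$ distinct colors appearing on the remaining edges of $K$.

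Fixing the coloring $\chi$ of the between-class edges first (there are $r^{e(G)-b} = r^{P-m}$ such $\chi$) and denoting by $|V_{uv}(\chi)|$ the number of colors permitted on $uv$ by the constraints from its transversal $K_k$'s, one obtains
\[
\rho_{r,k}(G) \;\leq\; \sum_{\chi}\prod_{uv \in B}|V_{uv}(\chi)|,
\]
where $B$ is the set of bad edges. The key step is to show that, on average over uniform $\chi$, the many active constraints force $|V_{uv}(\chi)|$ to be substantially smaller than $r$, so that the right-hand side is strictly below $r^{P-m} \leq r^{\ex(n,K_k)}$. The hypothesis $r > ek$ enters here: it is used to guarantee that for random $\chi$, a positive fraction of transversal $K_k$'s through $uv$ have pairwise distinctly colored non-$uv$ edges, imposing active restrictions of $|V_{uv}(\chi)|$ to $(\binom{k}{2}-1)$-element sets whose common intersection is typically small. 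Combined with a near-independence across different bad edges (whose relevant transversal $K_k$'s are almost edge-disjoint), one obtains the desired strict inequality as soon as $b \geq 1$, contradicting the assumption $\rho_{r,k}(G) \geq r^{\ex(n,K_k)}$.

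The main obstacle is the probabilistic estimate supporting $\sum_\chi \prod|V_{uv}(\chi)| < r^{P-m}$: the constraint events from different transversal $K_k$'s through a fixed bad edge share between-class edges and are hence correlated, and constraints from different bad edges can interact through their common between-class neighborhood. Handling these correlations requires a careful decoupling, exploiting that the $\Theta(n^{k-2})$ transversal $K_k$'s through a fixed bad edge share only the two endpoints $u$ and $v$ and are thus nearly edge-disjoint. The threshold $r > ek$ is calibrated precisely so that the resulting bound drops below $r^{P-m}$ when $b \geq 1$.
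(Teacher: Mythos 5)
The paper does not actually prove this statement: it is quoted verbatim from \cite[Lemma 4.4]{rainbow_kn} and used as a black box, so there is no in-paper proof to compare yours against. Judged on its own, your proposal follows the standard stability-to-exactness template and correctly isolates where the difficulty lies, but the crucial estimate is not carried out, and the form in which you state it is too weak to close the argument.

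Concretely, after fixing a colouring $\chi$ of the cross edges you need $\sum_\chi\prod_{uv\in B}|V_{uv}(\chi)|<r^{\ex(n,K_k)}$, i.e.\ $\mathbb{E}_\chi\bigl[\prod_{uv\in B}|V_{uv}(\chi)|\bigr]<r^{\ex(n,K_k)-(P-m)}=r^{\,b-(e(G)-\ex(n,K_k))}$. Since $e(G)=P-m+b\le \ex(n,K_k)+b$, the excess $e(G)-\ex(n,K_k)$ can be as large as $b$ (when $P=\ex(n,K_k)$ and $m=0$), and in that regime you must show the expectation is strictly less than $1$, i.e.\ that for almost every $\chi$ some bad edge has an \emph{empty} list. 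Your stated key step --- that each $|V_{uv}(\chi)|$ is ``substantially smaller than $r$'', e.g.\ at most $\binom{k}{2}-1$ --- therefore cannot suffice: it only gives an upper bound of order $\bigl(\binom{k}{2}-1\bigr)^{b}\ge 1$. What the lemma really requires is an exponentially small bound such as $\mathbb{E}_\chi|V_{uv}(\chi)|\le r\,(1-\Omega(1))^{\Theta(n^{k-2})}$, obtained by intersecting the $\Theta(n^{k-2})$ nearly independent $(\binom{k}{2}-1)$-element constraint sets through $uv$, together with a decoupling across bad edges and the preliminary cleaning (removing atypical vertices so that every bad edge genuinely lies in $\Theta(n^{k-2})$ transversal cliques and the partition is genuinely balanced). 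You name each of these ingredients as ``the main obstacle'' but do not execute any of them, and the hypothesis $r>ek$ is never actually used --- you only assert that it is ``calibrated precisely''. As written, the proposal is an accurate roadmap rather than a proof, and its central quantitative claim is stated in a form that would not yield the conclusion even if verified.
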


We note that the original version of Theorem~\ref{exact} is  more general and also applies to color patterns of $K_k$ with a vertex incident to $k-1$ edges of distinct colors.
Since $4e<12$, our main result, Theorem~\ref{thm_main}, follows from the following.
\begin{theorem}\label{thm_main2}
For any integer $r\geq 12$ and any $\delta>0$, there is $n_0$ with the following property. 
If~$G$ is a graph on $n>n_0$ vertices with $\rho_{r,4}(G)\geq r^{\ex(n,K_4)}$, then $G$ is $\delta n^2$-close to being tripartite, i.e., there is a partition 
$V(G)=V_1 \cup V_2 \cup V_3$ such that $e(V_1)+e(V_2)+e(V_3)< \delta n^2$.
\end{theorem}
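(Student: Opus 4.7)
The plan is the standard stability-via-regularity strategy for rainbow Erd\H{o}s-Rothschild problems. I first apply Szemer\'edi's regularity lemma to $G$ with a parameter $\eps \ll \delta$, obtaining an equipartition $V_0 \cup V_1 \cup \cdots \cup V_t$ and a reduced graph $R$ on $[t]$ whose edges correspond to the $\eps$-regular pairs of density at least some threshold $d = d(r,\delta)$. A standard cleaning step removes $\eta n^2$ edges of $G$ (those in irregular or sparse pairs, and inside clusters), and this loss translates to a factor at most $r^{\eta n^2}$ in the coloring count, which is absorbable for $\eta \ll \delta$. To each rainbow-$K_4$-free coloring $\phi$ I associate a \emph{color pattern} on $R$: for each $ij \in E(R)$, the set $C_{ij}(\phi) \subseteq [r]$ collects those colors that appear on at least an $\eps$-fraction of the edges of $G$ between $V_i$ and $V_j$. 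By the counting lemma, if some $K_4$ of $R$ admitted a rainbow transversal (one color per edge, all six distinct) then $G$ would contain a genuine rainbow $K_4$; hence every admissible pattern avoids rainbow transversals on every $K_4$ of $R$. Summing over the at most $2^{r\binom{t}{2}}$ admissible patterns (a constant in $n$), the number of colorings compatible with a fixed pattern is at most $r^{\eta n^2} \prod_{ij \in E(R)} |C_{ij}|^{e_G(V_i,V_j)}$.

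The heart of the proof is then the following combinatorial optimization lemma: for any graph $R$ on $t$ vertices and any admissible assignment $(C_{ij})_{ij \in E(R)} \subseteq 2^{[r]}$ (avoiding rainbow transversals on every $K_4$),
\[
\prod_{ij \in E(R)} |C_{ij}| \;\le\; r^{\ex(t,K_4)},
\]
with a quantitative stability version asserting that near-extremal configurations force $R$ to be $o(t^2)$-close to tripartite with $|C_{ij}| = r$ on almost every edge. The sharp threshold $r \geq 12$ enters precisely here. The basic non-tripartite competitor is $R = K_t$ with $|C_{ij}| = \binom{4}{2}-1 = 5$ on every edge, which trivially avoids rainbow transversals by pigeonhole on six edges and five colors; this yields $5^{\binom{t}{2}}$, dominated by the tripartite value $r^{\ex(t,K_4)} \approx r^{(2/3)\binom{t}{2}}$ exactly when $r^{2/3} > 5$, i.e., $r \geq 5^{3/2}$, so $r \geq 12$ works and $r = 11$ fails, matching the lower bound that motivates the theorem. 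Ruling out more intricate hybrid strategies (for example, partially reducing color palettes on some edges while keeping others full) with this sharp constant is the main technical obstacle and the principal improvement over the bound $r \geq 5434$ from \cite{multipattern}.

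Granted the lemma, the conclusion is essentially routine: the hypothesis $\rho_{r,4}(G) \ge r^{\ex(n,K_4)}$ combined with the counting-step upper bound forces the dominant pattern to be near-extremal, and the stability part of the lemma then yields that $R$ is $o(t^2)$-close to tripartite. Transferring this back to $G$ through the cluster structure (each $V_i$ is placed into one of three parts according to the tripartition of $R$) produces a tripartition $V(G) = V_1 \cup V_2 \cup V_3$ with $e(V_1) + e(V_2) + e(V_3) < \delta n^2$, after absorbing the edges lost to cleaning. The principal challenge is therefore to prove the combinatorial lemma with the sharp threshold $r \geq 12$, which likely requires a convexity or entropy-style global argument, or a careful inductive analysis of how copies of $K_4$ can sit inside $R$.
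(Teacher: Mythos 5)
Your proposal takes a genuinely different route from the paper. You follow the classical regularity-lemma pipeline (Szemer\'edi regularity, reduced graph $R$, color-pattern lists $C_{ij}$ on the edges of $R$, multicolor embedding lemma, then a combinatorial optimization lemma on $R$), which is the standard strategy going back to Alon--Balogh--Keevash--Sudakov and used by Hoppen--Lefmann--Odermann for the rainbow problem with large $r$. The paper instead uses multicolor hypergraph containers in the style of Balogh--Li: it builds a $6$-uniform hypergraph on $E(G)\times[r]$ whose hyperedges are rainbow $K_4$-copies, applies the Saxton--Thomason container theorem to obtain a small family of templates covering all rainbow-$K_4$-free colorings, picks a near-extremal template $L$, and then performs two graph-pruning operations (removing vertices whose edge-lists have small product, and removing non-critical triangles whose edge-lists are too rich), combined with a critical-triangle/edge/vertex bookkeeping argument and a robust stability theorem, to show the surviving graph is close to tripartite. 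There is no reduced graph and no list-optimization over $R$ anywhere in the paper.

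The genuine gap in your proposal is the combinatorial optimization lemma you isolate, which you explicitly leave unproved. This is not a routine step that can be deferred: it is essentially the optimization problem that Gupta--Pehova--Powierski--Staden formulate in their framework and that they \emph{conjecture} has the tripartite extremum exactly for $r\geq 12$ (this is the very conjecture that the present paper resolves). You correctly identify the two boundary competitors ($K_t$ with lists of size $5$ versus a tripartite $R$ with full lists) and the resulting threshold $r\geq 5^{3/2}$, but ruling out the ``hybrid'' configurations -- edges with lists of intermediate sizes, partial cliques, asymmetric patterns -- with the sharp constant $12$ is precisely where the difficulty lies, and you offer no argument for it. Moreover, to conclude stability of $G$ you additionally need a \emph{quantitative stability version} of this lemma (near-extremal patterns force $R$ to be $o(t^2)$-close to tripartite with full lists on almost all edges), which is even harder than the extremal statement. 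Since the regularity approach had been available since 2004 and this sharp threshold was only recently proved, and only by a method that sidesteps the optimization entirely, the missing lemma should be treated as the core obstacle rather than a plausible black box. As written, the proposal reduces the theorem to an open problem rather than proving it.

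Two secondary points worth noting. First, your count $r^{\eta n^2}\prod_{ij}|C_{ij}|^{e_G(V_i,V_j)}$ for colorings compatible with a fixed pattern omits the contribution of colors appearing on fewer than an $\eps$-fraction of a pair; this is fixable (binomial entropy bound, absorbable into $r^{\eta n^2}$ for $\eps$ small) but should be stated. Second, even granting the optimization lemma in extremal form, passing from ``$R$ close to tripartite'' to ``$G$ is $\delta n^2$-close to tripartite'' requires controlling the edges inside clusters and between sparse/irregular pairs against the multiplicative slack in the lemma; this is routine but is part of why one really needs the stability version with explicit error terms, not just the inequality $\prod|C_{ij}|\le r^{\ex(t,K_4)}$.
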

Our approach to Theorem~\ref{thm_main2} is inspired by the solution of the problem for triangles by Balogh and Li~\cite{BL19} and is based on multi-color hypergraph containers.

\subsection*{The  Erd\H{o}s-Rothschild problem and further related works}
As mentioned previously, the problem studied here is a natural and popular variation of the classical problem posed by Erd\H{o}s and Rothschild~\cite{Erd74} who asked
for the maximum number of $2$-edge-colorings with no monochromatic copies of the $K_k$.
Confirming their conjecture, Yuster~\cite{yuster} (for $k=3$), and Alon, Balogh, Keevash, and Sudakov~\cite{ABKS} (for $k\geq 4$) showed that
this maximum is attained  by the Tur\'an graph $T_{k-1}(n)$ and by this graph only.
Actually, the results in~\cite{ABKS} show that the same conclusion holds for $r=3$ colors and for all $k \geq 2$ (see also Jim\'{e}nez and the first author~\cite{hanjime}). 
For $r\geq 4$ the phenomenon does not persist and, while the problem and its variations enjoy a vivid interest, 
progress has been slow~\cite{botler,PS2021,PS2022,yilma}. There has been interest in colorings avoiding monochromatic copies of other graphs, see for instance \cite{CGM20,forbm,linear}.

Balogh~\cite{Bal06} was the first to consider 2-edge-colorings that avoid complete graphs with a prescribed coloring that is not monochromatic. 
The variant of this problem where we are interested in $r$-edge-colorings that avoid a given color pattern, which became known as the generalized Erd\H{o}s-Rothschild problem, has been proposed by Benevides, Sampaio, and the second author~\cite{BHS17}.  In particular, they showed that for almost all patterns of complete graphs, the maximum number of colorings is achieved by a complete multipartite graph. We remark that their result notably does not apply to the monochromatic pattern. Moreover, it does not imply that these extremal complete multipartite graph must be balanced. This has been further extended to a setting where the colorings must avoid a set of patterns, see~\cite{multipattern}. A recent paper by Gupta, Pehova, Powierski, and Staden~\cite[Section 3]{GPPS25+} makes progress in this general direction by associating the solution of the generalized Erd\H{o}s-Rothschild problem with the solution of a related optimization problem, from which the authors obtain precise results for some families of forbidden patterns.

To conclude the introduction, we mention that Erd\H{o}s-Rothschild-type problems have also been studied for other combinatorial structures, such as hypergraphs~\cite{CDT2018,kneser,LPRS}, linear spaces~\cite{CDT2018}, partial orders~\cite{DGST} and sum-free sets~\cite{CJLWZ23,HJ2018,LSS2021}.

\section{Notation and Tools}\label{sec:notation}

In this section, we fix the notation and introduce basic concepts and results used in our proofs. %For simplicity, we shall assume that colors lie in sets $[r]:=\{1, \ldots , r\}$. 
Wherever convenient we identify a graph $G$ with its edge set, thus we write $e\in G$ instead of $e\in E(G)$.

The proof will make use of the following supersaturation result.
 \begin{theorem}\label{theorem:supersaturation}
For all $k\geq 3$ and $\eps>0$, there is a $\gamma>0$ and an $n_0$ such that any graph $G$ on $n>n_0$ vertices
with $e(G)>\left(1-\frac1{k-1}+\eps\right)\binom n2$ contains at least $\gamma n^k$ copies of $K_k$.\qed
\end{theorem}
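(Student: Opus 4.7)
The plan is to apply the classical Erd\H{o}s--Simonovits averaging argument. Fix $k\geq 3$ and $\eps>0$. First I would pick a constant $m=m(k,\eps)$ large enough that
\[
\ex(m,K_k)\;\leq\;\Bigl(1-\tfrac{1}{k-1}\Bigr)\tfrac{m^2}{2}\;\leq\;\Bigl(1-\tfrac{1}{k-1}+\tfrac{\eps}{2}\Bigr)\binom{m}{2},
\]
where the first inequality is Tur\'an's theorem and the second holds once $m$ exceeds a constant depending on $k$ and $\eps$, since $m^2/2=\binom{m}{2}\cdot\frac{m}{m-1}$.

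Next, I would consider a uniformly random $m$-subset $S\subseteq V(G)$. Each edge of $G$ lies in $G[S]$ with probability $\binom{m}{2}/\binom{n}{2}$, so by linearity of expectation
\[
\E\bigl[e(G[S])\bigr]\;=\;e(G)\cdot\frac{\binom{m}{2}}{\binom{n}{2}}\;>\;\Bigl(1-\tfrac{1}{k-1}+\eps\Bigr)\binom{m}{2}.
\]
Writing $B=\binom{m}{2}$ and $T=(1-\frac{1}{k-1}+\eps/2)B$, and using the trivial bound $e(G[S])\leq B$, a routine reverse-Markov computation gives
\[
\P\bigl(e(G[S])>T\bigr)\;\geq\;\frac{\E[e(G[S])]-T}{B-T}\;\geq\;\frac{\eps}{2}.
\]
For any such $S$, the choice of $m$ ensures $e(G[S])>\ex(m,K_k)$, so Tur\'an's theorem supplies a copy of $K_k$ inside $G[S]$.

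To finish, I would double-count: at least $(\eps/2)\binom{n}{m}$ of the $m$-subsets contain a copy of $K_k$, and each $K_k$ in $V(G)$ is contained in exactly $\binom{n-k}{m-k}$ such subsets. Hence the number of copies of $K_k$ in $G$ is at least
\[
\frac{\eps}{2}\cdot\frac{\binom{n}{m}}{\binom{n-k}{m-k}}\;=\;\frac{\eps}{2}\cdot\frac{\binom{n}{k}}{\binom{m}{k}}\;\geq\;\gamma n^{k}
\]
for $\gamma=\gamma(k,\eps)>0$ and $n$ large enough.

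There is no deep obstacle here, as this is essentially the original Erd\H{o}s--Simonovits supersaturation argument. The only minor point to track is that the additive $O(m)$-gap between $\ex(m,K_k)$ and $(1-\frac{1}{k-1})\binom{m}{2}$ must be absorbed into the $\eps/2$ slack, which is exactly what forces $m$ to be chosen depending on $\eps$ (rather than, say, $m=k$).
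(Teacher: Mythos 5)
Your proof is correct, and it is the standard Erd\H{o}s--Simonovits averaging argument for supersaturation. The paper does not actually prove Theorem~\ref{theorem:supersaturation}; it is stated as a known result (the closing $\square$ indicates the proof is omitted), so there is nothing to compare against beyond confirming your argument. The only bookkeeping worth flagging: in the reverse-Markov step the cleanest route is the one-line bound $\P(e(G[S])>T)\geq (\E[e(G[S])]-T)/B > \eps/2$ using $B-T\leq B$, which avoids having to reason about the sign and size of $B-T$; and the positivity of $\eps$ already forces $\eps<1/(k-1)$ via $e(G)\leq\binom n2$, so the constants stay well-defined. The double count and the identity $\binom nm/\binom{n-k}{m-k}=\binom nk/\binom mk$ are used correctly, giving $\gamma$ depending only on $k$ and $\eps$ as required.
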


Further we will need the following robust version of stability which is an extension of a result by F\"uredi~\cite{fu15}.
\begin{theorem}[{\cite[Theorem 1.2]{BBCLMS}}]
\label{thm:tfar}
For all positive integers $n,t,k$ the following holds. Every graph~$G$ which is not $t$-close to being~$k$-partite contains at least
\[\frac{n^{k-1}}{e^{2k}\cdot k!}\left(e(G)+t-\left(1-\frac1k\right)\frac{n^2}2 \right)\]
copies of $K_{k+1}$.\qed
\end{theorem}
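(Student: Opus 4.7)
The approach is to combine Füredi's stability argument (which handles the $K_{k+1}$-free case) with a supersaturation count that simultaneously exploits the excess of edges above the Turán number and the distance from being $k$-partite.

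First I would choose a $k$-partition $V(G) = V_1 \cup \dots \cup V_k$ that minimizes the total number $m := \sum_i e(G[V_i])$ of inside edges; by hypothesis $m > t$. Local optimality yields the standard max-cut inequalities: for every $u \in V_i$ and every $j \neq i$, $|N(u) \cap V_i| \leq |N(u) \cap V_j|$, which in particular bounds the inside-degree of every vertex by $\deg(u)/k$. Since the cross-edge count $e(G) - m$ cannot exceed $\ex(n, K_{k+1}) = (1-1/k) n^2/2$, one also obtains the bookkeeping bound $e(G) \leq m + (1-1/k)n^2/2$, which will be used at the end to merge the two parts of the count into the claimed additive form $e(G) + t - (1-1/k)n^2/2$.

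The heart of the argument is to show that every inside edge $uv \in E(G[V_i])$ extends to $\Omega(n^{k-1})$ copies of $K_{k+1}$ that take one vertex from each other part $V_j$. My plan here is a greedy extension: select $w_j \in N(u) \cap N(v) \cap V_j$ for each $j \neq i$ so that $w_j$ is adjacent to every previously chosen $w_\ell$. The max-cut inequalities together with the near-optimality of the cross-part bipartite subgraphs (if they were far from complete one could reshuffle to decrease $m$) guarantee that each step preserves a constant fraction of viable choices, with loss of roughly a factor $1/e^2$ per step, giving at least $n^{k-1}/(e^{2k}(k-1)!)$ extensions per inside edge. Summing over the $m > t$ inside edges and dividing by the overcount $\binom{k+1}{2}$ yields a $K_{k+1}$-count of at least $t \cdot n^{k-1}/(e^{2k} k!)$. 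A parallel Moon--Moser-type supersaturation accounts for the edges exceeding the Turán threshold: each such excess edge contributes $\Omega(n^{k-1}/(e^{2k} k!))$ further copies of $K_{k+1}$. Adding the two contributions (using the bound $e(G) \leq m + (1-1/k)n^2/2$ to avoid double counting) produces $\#K_{k+1}(G) \geq \frac{n^{k-1}}{e^{2k} k!}\bigl(e(G) + t - (1-1/k)n^2/2\bigr)$, as claimed.

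The main obstacle is the greedy extension in the sparse regime, where $e(G)$ is well below the Turán threshold while $t$ is correspondingly large, so the RHS of the inequality is still positive. In this regime the cross-part bipartite subgraphs need not be literally close to complete, and the success probability at each step of the greedy extension must be controlled more carefully---likely through a two-level averaging (first over the choice of inside edge, then over the extension) or by applying the argument recursively to the subgraph induced by $V(G) \setminus V_i$. The constant $1/e^{2k}$ in the denominator is exactly the loss one expects from an iterated probabilistic extension over $O(k)$ levels, so I would anticipate that a careful execution of this greedy/probabilistic step, possibly with a union-bound clean-up, recovers the stated constant.
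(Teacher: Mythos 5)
The paper does not actually prove this statement: Theorem~\ref{thm:tfar} is imported as a black box from [BBCLMS, Theorem~1.2] (note the immediate \verb|\qed|), so there is no internal proof to compare against; the relevant comparison is with the proof in that source, which is a genuinely global, inductive argument rather than the per-internal-edge extension you propose.

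Your load-bearing step --- that in a minimising $k$-partition every internal edge $uv\in E(G[V_i])$ extends to at least $n^{k-1}/(e^{2k}(k-1)!)$ copies of $K_{k+1}$ taking one vertex from each other part --- is asserted, not proved, and it is false in the generality required. The local optimality of the partition gives only $|N(u)\cap V_i|\le |N(u)\cap V_j|$ for each $j\ne i$; it gives no lower bound on the density of the cross-part bipartite graphs, and ``if they were far from complete one could reshuffle to decrease $m$'' is not a valid inference (reshuffling a vertex controls its own internal degree, not the global cross-density). The inequality in the theorem is already non-vacuous as soon as $e(G)>\bigl(1-\frac1k\bigr)\frac{n^2}{4}$ (since $t<m\le e(G)$), i.e.\ at half the Tur\'an density, where typical degrees are around $\bigl(1-\frac1k\bigr)\frac n2$ and the common neighbourhood $N(u)\cap N(v)\cap V_j$ of an internal edge can be empty, so the greedy extension can die at the very first step. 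You flag this yourself as ``the main obstacle,'' but the proposed remedies (``two-level averaging,'' ``applying the argument recursively,'' ``a union-bound clean-up'') are named rather than carried out, and this is precisely where all of the content of the theorem lives. Two further soft spots: the constant $e^{-2k}$ is reverse-engineered from the answer (``the loss one expects'') rather than derived; and the final step of ``adding the two contributions'' --- one lower bound driven by $t$ and one by $e(G)-\bigl(1-\frac1k\bigr)\frac{n^2}{2}$ --- is not legitimate without exhibiting disjoint families of cliques, since two lower bounds on the same quantity do not add. As it stands the proposal is a plausible opening move (minimising partition, $m>t$, the bookkeeping identity) followed by an unproved core claim, so it does not constitute a proof.
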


% Regarding supersaturation, we use the following.
 %\begin{theorem}\label{theorem:supersaturation}
 %Consider integers $n,k>0$ and $t \geq 0$. If $G$ is an $n$-vertex graph with the property that at least $t$ edges need to be deleted to produce a $k$-partite graph, then there are at least
 %$$c(k)n^{k-2}\left(e(G)+t-\left(1-\frac{1}{k-1}\right)\binom{n}{2}\right)$$ copies of $K_k$ in $G$ for some $c(k)>0$.
 %\end{theorem}

\subsection{The hypergraph container theorem}

Given $\ell \geq 2$, an $\ell$-uniform hypergraph $\mathcal{H}=(V,E)$ is defined by a finite vertex set $V$ and a set $E \subset \{S \subset V \colon |S|=\ell\}$ of hyperedges. Let $v(\mathcal{H})$ and $e(\mathcal{H})$ denote the number of vertices and the number of edges of $\mathcal{H}$, respectively. The \emph{degree} of a vertex $v \in V$ and the \emph{co-degree} of a set $S \subset V$ are defined, respectively, as
$$d_{\mathcal{H}}(v)=\left|\{e \in E \colon v\in e\}\right| \textrm{ and }d_{\mathcal{H}}(S)=\left|\{e \in E \colon S \subseteq e\}\right|.$$
Clearly, the average degree $\bar{d}$ of $\mathcal{H}$ satisfies
$$\bar{d}(\mathcal{H})=\frac{1}{|V|}\sum_{v \in V} d_{\mathcal{H}}(v)=\frac{\ell e(\mathcal{H})}{v(\mathcal{H})}.$$
We also consider the \emph{maximum $j$-co-degree} of $\mathcal{H}$
$$\Delta_j({\mathcal{H}})=\max\{d_{\mathcal{H}}(S) \colon |S|=\ell\}$$
to define the following quantity for $0<\tau<1$:
\begin{equation}\label{codegree_function}
\Delta(\mathcal{H},\tau)=2^{\binom{\ell}{2}-1} \sum_{j=2}^\ell 2^{-\binom{j-1}{2}}\frac{\Delta_j(\mathcal{H})}{\bar{d}(\mathcal{H})\tau^{j-1}}.
\end{equation}
When the hypergraph $\mathcal{H}$ is clear from context, we will omit it to simplify the notation.  

A set $S \subset V$ is an independent set in $\mathcal{H}$ if $\mathcal{H}[S]$ contains no edges.  We shall use the following container theorem from~\cite{ST2015} (see also~\cite{BS18}).
\begin{theorem}\label{container_ST}
Let $\mathcal{H}$ be an $\ell$-uniform hypergraph with vertex set $[N]$. Fix $0<\varepsilon,\tau<1/2$ such that 
$\tau<1/(200 \cdot \ell \cdot \ell!^2)$ and $\Delta(\mathcal{H},\tau)\leq \varepsilon/(12 \cdot \ell!)$. Then there exists $c=c(\ell)<1000 \cdot \ell  \cdot \ell!^3$ and a collection $\mathcal{C}$ of vertex subsets with the following properties: 
\begin{itemize}
\item[(i)] Every independent set of $\mathcal{H}$ is a subset of some $C \in \mathcal{C}$.

\item[(ii)] For every $C \in \mathcal{C}$, $e(\mathcal{H}[C])\leq \varepsilon \cdot e(\HH)$.

\item[(iii)] $\displaystyle{\log{|\mathcal{C}|}} \leq cN\tau \cdot \log(1/\eps) \cdot \log(1/\tau)$.\qed
\end{itemize}

\end{theorem}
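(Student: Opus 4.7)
The plan is to adapt the multi-color hypergraph container strategy of Balogh--Li from rainbow triangles to rainbow $K_4$.

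I set up the $6$-uniform hypergraph $\HH$ on $V(\HH)=[r]\times \binom{[n]}{2}$ whose hyperedges are the rainbow $K_4$-templates $\{(c_i,e_i):i\in[6]\}$ with $\{e_i\}$ forming a $K_4$ in $K_n$ and the $c_i$'s pairwise distinct. Each rainbow-$K_4$-free coloring $\phi$ of a subgraph $G\subseteq K_n$ then yields the independent set $\{(\phi(e),e):e\in E(G)\}$ of $\HH$. Standard estimates give $|V(\HH)|=r\binom{n}{2}$, $e(\HH)=\binom{n}{4}r(r-1)\cdots(r-5)$, $\bar d(\HH)=\Theta_r(n^2)$, and $\Delta_j(\HH)=O_r(n^{6-j-v})$ where $v$ is the number of vertices spanned by $j$ fixed edges. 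Choosing $\eps=\eps(\delta)$ small and $\tau=n^{-\alpha}$ with a suitable $\alpha>0$, Theorem~\ref{container_ST} yields a family $\mathcal{C}$ with $\log|\mathcal{C}|=o(n^2)$ and each $C\in\mathcal{C}$ containing at most $\eps\,e(\HH)$ hyperedges. Pigeonholing on the hypothesis $\rho_{r,4}(G)\geq r^{\ex(n,K_4)}$ gives a container $C$ with $\prod_{e\in E(G)}|L_C(e)|\geq r^{\ex(n,K_4)-o(n^2)}$, where $L_C(e)=\{c:(c,e)\in C\}$.

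Define $F=\{e\in\binom{[n]}{2}:|L_C(e)|\geq 6\}$. Any $K_4$ whose six edges all lie in $F$ extends greedily to at least $6!$ rainbow $K_4$-templates in $\HH[C]$, so sparsity forces $\#\{K_4\subseteq F\}\leq \eps\binom{r}{6}\binom{n}{4}=O_r(\eps n^4)$. The assumption $r\geq 12$ enters through the numerical inequality $r^{2/3}>5$, equivalently $r^{\ex(n,K_4)}>5^{\binom{n}{2}}$, which rules out ``$5$-color'' containers and, combined with $|L_C(e)|\leq 5$ on $E(K_n)\setminus F$, yields the quantitative inequality
\[
|E(G)\setminus F|\leq \frac{e(G)-\ex(n,K_4)+o(n^2)}{1-\log_r 5},
\]
a finite multiple precisely because $r\geq 12$. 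Applying the robust F\"uredi stability result Theorem~\ref{thm:tfar} (with $k=3$) to $F$ then yields a tripartition $V=V_1\cup V_2\cup V_3$ with $\sum_i e_F(V_i)\leq \eta n^2$ for any prescribed $\eta>0$, by taking $\eps$ sufficiently small in terms of $\eta$.

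Finally, to deduce $\sum_i e_G(V_i)<\delta n^2$, note that
\[
\sum_i e_G(V_i)\leq \sum_i e_F(V_i)+|E(G)\setminus F|\leq \eta n^2+|E(G)\setminus F|,
\]
so the task reduces to bounding $|E(G)\setminus F|<(\delta-\eta)n^2$. Assuming for contradiction that $G$ is not $\delta n^2$-close to tripartite, Theorem~\ref{thm:tfar} applied to $G$ produces $\Omega_\delta(n^4)$ copies of $K_4$; at most $O_r(\eps n^4)$ of these lie inside $F$, and the rest contain at least one edge of $E(G)\setminus F$, so a careful $K_4$-counting (using the near-tripartite structure of $F$ to bound $K_4$-extensions through each bad edge) combined with the displayed product inequality closes a chain that contradicts the sparsity of $C$. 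The main obstacle lies in executing this transfer step quantitatively tight at the threshold $r=12$: the constants $1-\log_{12}5\approx 0.35$ and $\binom{12}{6}=924$ enter essentially, and naive combinations yield only consistent --- not contradictory --- inequalities, so the argument must track these numerical margins carefully, reflecting the same sharpness that underlies the conjecture of Gupta--Pehova--Powierski--Staden.
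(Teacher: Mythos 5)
Your proposal does not address the statement you were asked to prove. Theorem~\ref{container_ST} is the hypergraph container theorem itself (due to Saxton and Thomason, cited in the paper from~\cite{ST2015}): a general statement about an arbitrary $\ell$-uniform hypergraph $\mathcal{H}$ with small co-degree function $\Delta(\mathcal{H},\tau)$, asserting the existence of a small family of containers covering all independent sets. The paper does not prove this theorem; it imports it as a black box. A proof would require the container algorithm machinery --- the iterative construction of fingerprints and the degree-ordering argument of Saxton--Thomason or Balogh--Morris--Samotij --- none of which appears in your write-up. Instead, your first sentence already invokes ``Theorem~\ref{container_ST} yields a family $\mathcal{C}$,'' i.e., you assume the very statement you were supposed to establish.

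What you have actually sketched is an outline of the paper's \emph{application} of the container theorem: the construction of the $6$-uniform hypergraph of rainbow $K_4$'s in Theorem~\ref{thm_container}, followed by the deduction of the stability statement Theorem~\ref{thm_main2}. Even judged on those terms the sketch is incomplete --- the paper's argument proceeds via an iterated vertex/triangle deletion process (Operations 1 and 2) with careful list-size bookkeeping rather than the direct $K_4$-counting transfer you describe, and you yourself concede that your ``naive combinations yield only consistent --- not contradictory --- inequalities.'' But the primary issue is the mismatch of target: no proof of the container theorem has been offered.
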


\subsection{Multicolor containers}

In this paper, we shall apply the container method in a way that was inspired by the work of Balogh and Li~\cite{BL19}. %In the following definitions we use their terminology.
\begin{definition}[Template, subtemplate and rainbow copies]
An \emph{$r$-template} of a graph $G$ is a function $L\colon E(G) \rightarrow 2^{[r]}$ that associates with each $e\in G$ a list of colors $L(e)$. 
Given two $r$-templates $L_1$ and $L_2$ of $G$, we say that $L_1$ is a \emph{subtemplate} of $L_2$, denoted by $L_1\subset L_2$, if $L_1(e)\subset L_2(e)$ for every $e\in G$.

A \emph{rainbow copy of $K_4$ in a template $L$} is a set $\{(e_1,c_1),\ldots,(e_6,c_6)\}$ such that:
\begin{enumerate} 
\item The edges $e_1,\ldots,e_6$ produce a copy of $K_4$ in $G$.
\item The colors $c_1,\ldots,c_6$ are all distinct and satisfy $c_i\in L(e_i)$ for every $i\in[6]$.
\end{enumerate}
\end{definition}

Note that an edge coloring $\phi$ of a graph $G$ can be seen as a template of $G$ in which each $L(e)$ consists exactly of the color $\phi(e)$. 
%Note that any edge-colored $n$-vertex graph $\hat{F}$ may be viewed as the template $L$ on the same vertex set such that $L(\{x,y\})$ is a singleton containing the color of $e=\{x,y\}$ if $e$ is an edge of $\hat{F}$, and such that $L(\{x,y\})=\emptyset$ otherwise. More generally, if the vertex set of $\hat{F}$ is a subset of the vertex set of the complete graph $K_n$, it may also be viewed as a template of order $n$ by treating any elements of $V(K_n)$ that are not vertices of $\hat{F}$ as isolated vertices. With this understanding, the number of copies $c(\hat{F},L)$ of a colored graph $\hat{F}$ in a template $L$ is the number of subtemplates of $L$ that are color-preserving isomorphic copies of $\hat{F}$. Also, given a pattern $P$ of $F$, the number of copies $c(P,L)$ of $P$ in a template $L$ is the number of subtemplates of $L$ that are colored graphs $\hat{F}$ for which the partition induced by the colors produces pattern $P$. We say that $L$ is $P$-free if $c(P,L)=0$.
With Theorem~\ref{container_ST}, we derive the following.
\begin{theorem}\label{thm_container}
For every $r \geq 6$, there exist $n_0=n_0(r)$ and a constant $c(r)$ such that the following hold. For any graph $G$ on $n$ vertices, $n \geq n_0$, there is a collection $\mathcal{C}$ of $r$-templates of $G$ such that:
\begin{enumerate}
\item\label{it:container1} Every rainbow-$K_4$-free $r$-coloring of $G$ is a subtemplate of some $L\in\mathcal{C}$.
\item \label{it:container2} For every $L \in \mathcal{C}$,  the number of rainbow-$K_4$ copies in $L$ is at most $r^{4} n^{-1/3}\binom{n}{4}$.
\item \label{it:container3} $\displaystyle{\left|\mathcal{C}\right|\leq 2^{c n^{-1/3} \binom{n}{2} \log^2{n}}}$.

\end{enumerate}
\end{theorem}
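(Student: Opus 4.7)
The plan is to apply the hypergraph container theorem (Theorem~\ref{container_ST}) to an auxiliary hypergraph encoding rainbow $K_4$ copies, in the spirit of Balogh--Li's multicolor container strategy. Let $\mathcal{H}$ be the $6$-uniform hypergraph on vertex set $V(\mathcal{H}) = E(G) \times [r]$ whose hyperedges are the $6$-element sets $\{(e_1,c_1),\dots,(e_6,c_6)\}$ with $e_1,\dots,e_6$ spanning a $K_4$ in $G$ and $c_1,\dots,c_6$ pairwise distinct colors. Each $r$-template $L$ of $G$ corresponds bijectively to the subset $S_L = \{(e,c) : c \in L(e)\}$ of $V(\mathcal{H})$; the subtemplate relation is set inclusion, and $L$ is rainbow-$K_4$-free precisely when $S_L$ is independent in $\mathcal{H}$. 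Hence the containers from Theorem~\ref{container_ST}, read as templates, directly deliver (1), (2), (3) from the properties (i), (ii), (iii).

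Next I would estimate the parameters: $|V(\mathcal{H})| \leq r\binom{n}{2}$, $e(\mathcal{H}) = K_4(G)\cdot r(r-1)(r-2)(r-3)(r-4)(r-5)$, and $\bar d = 6\,e(\mathcal{H})/|V(\mathcal{H})|$. For the co-degrees, I count $K_4$-extensions of a given $j$-set of edge-color pairs (with distinct edges and distinct colors), together with valid assignments of distinct colors to the remaining edges. For $j=2$ the worst case is two edges sharing a vertex, giving $O(n)$ extensions; for $j=3$ it is three edges forming a triangle, again giving $O(n)$. This yields $\Delta_2 \leq n(r-2)(r-3)(r-4)(r-5)$ and $\Delta_3 \leq n(r-3)(r-4)(r-5)$; for $j\geq 4$ the vertex set of the $K_4$ is already determined, giving $\Delta_j \leq (r-j)\cdots(r-5)$.

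If $K_4(G) \leq r^4 n^{-1/3}\binom{n}{4}/[r(r-1)\cdots(r-5)]$, the full template $L_{\text{full}}(e) = [r]$ already contains at most $r^4 n^{-1/3}\binom{n}{4}$ rainbow $K_4$ copies, and $\mathcal{C} = \{L_{\text{full}}\}$ trivially satisfies all three properties. Otherwise I invoke Theorem~\ref{container_ST} with $\varepsilon = r^4 n^{-1/3}\binom{n}{4}/e(\mathcal{H}) \in (0,1/2)$, which bakes (2) into the hyperedge bound of (ii), and $\tau = c_1(r)\,n^{-1/3}$ for a sufficiently large constant $c_1(r)$. The key observation is that the ratios
\[
\frac{\Delta_j/(\bar d\,\tau^{j-1})}{\varepsilon} \;=\; \frac{\Delta_j\cdot e(G)}{6\,r^3\,\tau^{j-1}\,n^{-1/3}\binom{n}{4}}
\]
are independent of $K_4(G)$, so the container hypothesis $\Delta(\mathcal{H},\tau)\leq \varepsilon/(12\cdot 6!)$ can be verified uniformly. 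Using $e(G)\leq \binom{n}{2}$ and the $\Delta_j$ bounds, the tightest constraint is $j=3$, where the ratio is $O\!\left(1/(n^{2/3}\tau^2)\right) = O(1/c_1(r)^2)$; taking $c_1(r)$ large enough brings this below the universal constant appearing in $\Delta(\mathcal{H},\tau)$. The remaining $j$'s (with $j=2$ the next tightest) together with the absolute threshold $\tau < 1/(200\cdot 6\cdot (6!)^2)$ then determine $n_0(r)$. Property (iii) yields $\log|\mathcal{C}| = O(r\,c_1(r)\,n^{5/3}\log^2 n)$, matching (3) with $c(r) = O(r\,c_1(r))$.

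The main technical obstacle is precisely this parameter balancing: the very small universal constants in the formula for $\Delta(\mathcal{H},\tau)$ force $c_1(r)$, and hence $n_0(r)$, to be quite large, while the codegree bound $\Delta_3 = \Theta(nr^3)$ coming from triangle configurations is what makes the $j=3$ term dominant and ultimately pins down the $n^{-1/3}$ exponent appearing in both (2) and (3). A minor boundary issue is when $\varepsilon$ computed as above exceeds $1/2$: there one clips $\varepsilon$ to $1/2$, which still yields (2) because in that range $e(\mathcal{H})\leq 2r^4 n^{-1/3}\binom{n}{4}$, or simply extends the trivial-container case to cover it.
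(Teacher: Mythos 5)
Your proposal is correct and follows essentially the same route as the paper: build the $6$-uniform auxiliary hypergraph $\mathcal{H}$ on $E(G)\times[r]$ whose hyperedges are the rainbow copies of $K_4$, estimate the co-degrees (with the dominant contribution coming from $j=3$, i.e.\ triangle configurations), and invoke Theorem~\ref{container_ST} with $\tau = \Theta_r(n^{-1/3})$ so that (ii) and (iii) translate into properties~(2) and~(3). The one noteworthy difference is your choice of $\varepsilon$. The paper fixes $\varepsilon = n^{-1/3}/[(r-1)(r-2)]$ independently of $G$ and then verifies $\Delta(\mathcal{H},\tau)\le\varepsilon/(12\cdot 6!)$ via estimates that tacitly lower-bound $\bar d$ by its value for $K_n$, namely $\binom{n-2}{2}(r-1)\cdots(r-5)$; this bound can fail for graphs with few $K_4$'s. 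Your $G$-dependent choice $\varepsilon = r^4 n^{-1/3}\binom{n}{4}/e(\mathcal{H})$ makes the ratios $\Delta_j/(\bar d\,\tau^{j-1}\varepsilon)$ manifestly independent of the $K_4$-count, so the container hypothesis is verified uniformly, and you explicitly dispose of the sparse regime (and of $\varepsilon\ge 1/2$) with the trivial container. This is a cleaner and slightly more robust implementation of the identical strategy, and it quietly patches a soft spot in the paper's parameter verification.
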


Indeed the notion  $r$-coloring can be replaced by $r$-template in \eqref{it:container1} but we do not need it here.

%\begin{theorem}\label{thm_container}
%For every $r \geq 6$, there exist $n_0=n_0(r)$ and a constant $c(r)$ such that the following hold. For $n \geq n_0$, there is a collection $\mathcal{C}$ of $r$-templates of order $n$ such that:
%\begin{itemize}
%\item[(a)] Every $K_4^R$-free $r$-template of order $n$ is a subtemplate of some $L\in\mathcal{C}$.

%\item[(b)] For every $L \in \mathcal{C}$,  $c(K_4^R,L) \leq r^{4} n^{-1/3}\binom{n}{4}$.

%\item[(c)] $\displaystyle{\left|\mathcal{C}\right|\leq 2^{c n^{-1/3} \binom{n}{2} \log^2{n}}}$.

%\end{itemize}
%\end{theorem}

\begin{proof}
%Consider a complete graph $K_n$ with vertex set $[n]$ for sufficiently large $n$. 
Let  $G$ be a graph on $n$ vertices,  $n$ sufficiently large.  Let $\HH=(V,E)$ be the hypergraph with vertex set $V=E(G)\times[r]$
which can be seen as the ``complete'' template $L$ of $G$ whose  lists of colors is $L(e)=[r]$ for each edge $e\in E(G)$.
We define the edges of $\HH$ to be the rainbow copies of $K_4$ in $L$.
Thus $\HH$ is  $6$-uniform and we will apply Theorem~\ref{container_ST} to $\mathcal{H}$. 

Using the notation of Theorem~\ref{container_ST}  we have   $N\leq r\binom{n}{2}$, $\ell=6$, $e(\HH)\leq r(r-1)\cdots (r-5)\binom{n}{4}$ and
$$\bar{d}\leq\binom{n-2}{2}(r-1)\cdots(r-5).$$
 With foresight, and for $n$ sufficiently large we set
\begin{equation}\label{eq_defs}
\eps=\frac{n^{-1/3}}{(r-1)(r-2)} <\frac12\textrm{ and }\tau= \sqrt{12 \cdot 6!} \cdot 2^9  n^{-1/3} <\frac1{1200 \cdot 6!^2}.
\end{equation}
We wish to show that $\Delta(\HH,\tau)$ defined in~\eqref{codegree_function} satisfies
\begin{equation}\label{eq1.1}
\Delta(\HH,\tau)= 2^{14} \left(\frac{\Delta_2}{\bar{d}\tau}+\frac{\Delta_3}{2\bar{d}\tau^2}+\frac{\Delta_4}{4\bar{d}\tau^3} +\frac{\Delta_5}{8\bar{d}\tau^4} +\frac{\Delta_6}{16\bar{d}\tau^5} \right)< \frac{\varepsilon}{12 \cdot 6!}.
\end{equation}
Clearly, $\Delta_6(\HH)=1$ and we now consider $\Delta_i$ for $2 \leq i \leq 5$. 

For $i=2$ consider $S=\{(e_1,c_1),(e_2,c_2)\}$. If $c_1=c_2$ or $e_1=e_2$, $d(S)=0$. Otherwise, $d(S)=(r-2)\cdots(r-5)$ if $e_1 \cap e_2=\emptyset$, and $d(S)=(n-3)(r-2)\cdots(r-5)$ if $|e_1\cap e_2|=1$. We conclude that 
$$\frac{\Delta_2}{\bar{d} \tau} \leq \frac{2 \cdot (n-3)(r-2)\cdots(r-5)}{(n-2)(n-3)(r-1)\cdots (r-5) \tau}<\frac{4}{(r-1)n\tau}.$$
For $i=3$ consider $S=\{(e_1,c_1),(e_2,c_2),(e_3,c_3)\}$. If two of the colors are the same, or if the three edges induce anything other than a triangle, a path on three edges or a star on three edges, then $d(S)=0$. Considering the other cases, we have
$$\frac{\Delta_3}{\bar{d} \tau^2} \leq \frac{2(n-3)(r-3)(r-4)(r-5)}{(n-2)(n-3)(r-1)\cdots (r-5) \tau^2}<\frac{4}{(r-1)(r-2)n\tau^2}.$$
With similar arguments we obtain 
\begin{eqnarray*}
\frac{\Delta_4}{\bar{d} \tau^3}&\leq&\frac{2(r-4)(r-5)}{(n-2)(n-3)(r-1)\cdots (r-5) \tau^3}<\frac{4}{(r-1)(r-2)(r-3)n^2\tau^3},\\
\frac{\Delta_5}{\bar{d} \tau^4}&\leq&\frac{2(r-5)}{(n-2)(n-3)(r-1)\cdots (r-5) \tau^4}<\frac{4}{(r-1)\cdots (r-4)n^2\tau^4}.\\
\end{eqnarray*}
Further, note that 
$$n^2\tau^5,n\tau^2 \ll n^2\tau^4 \qquad\text{and}\qquad  n\tau \ll n^2\tau^3$$
and as a consequence,
$$\Delta(\HH,\tau) \leq \frac{5 \cdot 2^{13} \Delta_3}{\bar{d} \tau^2}<\frac{5 \cdot 2^{15}}{(r-1)(r-2) n\tau^2} = \frac{n^{-1/3}}{(r-1)(r-2)} \frac{5 \cdot 2^{15}}{12 \cdot 6! \cdot 2^{18}}< \frac{\eps}{12 \cdot 6!}.$$
Applying Theorem~\ref{container_ST}, there is a family $\mathcal{C}$ of $r$-templates such that every rainbow-$K_4$-free $r$-coloring of $G$ is a subtemplate of some $L \in \mathcal{C}$. Further the number of rainbow copies of $K_4$ in any $L \in \mathcal{C}$ is at most
$$\eps e(\HH) \leq \frac{n^{-1/3}}{(r-1)(r-2)} \cdot  r(r-1)\cdots (r-5) \binom{n}{4} \leq n^{-1/3} r^4 \binom{n}{4}.$$
Finally, for  $c'<6000\cdot (6!)^3$, we have
$$|\mathcal{C}| \leq 2^{c' r \binom{n}{2} \tau \log(1/\eps) \log(1/\tau)}\leq 2^{c n^{-1/3} \binom{n}{2} \log^2{n}},$$
as required.
\end{proof}

\section{Proof of Theorem~\ref{thm_main2}}

In this section, we prove Theorem~\ref{thm_main2} which states that, for any $r\geq12$ and $\delta>0$, any sufficiently large $n$-vertex graph $G$ with $\rho_{r,4}(G)\geq  r^{\ex(n,K_4)}$ is $\delta n^2$-close to being tripartite. As mentioned in the introduction, it immediately gives the main result of this paper using Theorem~\ref{exact}. 
%For the reader's convenience, we restate it here.
%\begin{theorem}
%For $r\geq 12$ and $\delta>0$, there is $n_0$ with the following property. If $n>n_0$ and $G$ is an $n$-vertex graph such that $|\mathcal{C}_{r,K_4^R}(G)| \geq r^{\ex(n,K_4)}$, then there exists a partition $V(G)=V_1 \cup V_2 \cup V_3$ such that $e(V_1)+e(V_2)+e(V_3)< \delta n^2$.
%\end{theorem}

\begin{proof}[Proof of Theorem~\ref{thm_main2}]
Fix $r\geq 12$ and $\delta>0$, we may assume $\delta<1/2$. 
We apply Theorem~\ref{thm_container} to obtain $c_{\ref{thm_container}}$ and $n_{\ref{thm_container}}$. 
Let $\xi=\delta/300e^6$ 
%In the following, we shall assume that $n$ is sufficiently large %In particular, $n > \max\{n_0',1/\beta^4\}$. 
and we apply Theorem~\ref{theorem:supersaturation} with $k=4$ and $\eps=\xi$ to obtain  $\gamma$ and $n_{\ref{theorem:supersaturation}}.$
We choose $n_0>\max\{n_{\ref{thm_container}}, n_{\ref{theorem:supersaturation}}\}$  sufficiently large.

Let $n>n_0$ and let $G$ be an $n$-vertex graph with $\rho_{r,4}(G) \geq r^{\ex(n,K_4)}$.
By applying Theorem~\ref{thm_container} to $G$ we obtain  a collection  $\mathcal{C}$ with properties stated in the theorem.
In particular,  property~\eqref{it:container1} guarantees that any  coloring $\phi$ of $G$ with no rainbow $K_4$ is a subtemplate $\phi\subset L'$ of some $L'\in \C$ and
by $\Phi(L')$ we denote the family of such colorings $\phi\subset L'$.
Then, by averaging there is a template $\widehat L$ such that 
\begin{equation*}%\label{eq_container}
|\Phi(\widehat L)|\geq \frac{r^{\ex(n,K_4)}}{|\mathcal{C}|} \geq r^{\ex(n,K_4)-cn^{-1/3}\log^2(n) \binom{n}{2}}\geq r^{\frac{n^2}{3}-n^{7/4}}.
\end{equation*}
%Let $m_i=\left|\{e \in E(G)\colon | L(e)|=i\}\right|$. Then 

We modify $\widehat L$ to obtain an $L\supset \widehat L$ as follows: if $|\widehat L(e)|\geq 6$ we set $L(e)=[r]$ and we let $L(e)=\widehat L(e)$ otherwise.
Then %for $m_i=\left|\{e \in E(G)\colon | L(e)|=i\}\right|$ we have
\begin{equation}\label{eq_container}
 r^{\frac{n^2}{3}-n^{7/4}}\leq |\Phi(L)|\leq \prod_{e\in G}|L(e)|.
\end{equation}

\begin{claim}\label{cl:copiesL}
The number of rainbow  copies of $K_4$ in $L$ is at most $ r^{10} n^{-1/3} \binom{n}{4}$.
%The underlying graph of any copy of $K_4^R$ in $L'$ also defines a copy of $K_4^R$ in $L$.
\end{claim}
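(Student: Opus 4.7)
The plan is to compare, for each copy of $K_4$ in $G$, the number of rainbow colorings in $L$ with the number of rainbow colorings in $\widehat L$, and then invoke Theorem~\ref{thm_container}\eqref{it:container2}, which guarantees that $\widehat L\in\mathcal{C}$ admits at most $r^{4}n^{-1/3}\binom{n}{4}$ rainbow copies of $K_4$. The only reason $L$ can carry more rainbow copies than $\widehat L$ is that some lists were enlarged, but the enlargement rule only touches lists of size at least $6$, which is exactly the number of edges of $K_4$; this is the source of the constant in the claim.

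To make the comparison I would fix a copy $K$ of $K_4$ in $G$ and write $r_L(K)$ and $r_{\widehat L}(K)$ for the number of rainbow colorings of its six edges in $L$ and in $\widehat L$, respectively. First I would partition $E(K)$ into the \emph{big} edges $B=\{e\in K\colon |\widehat L(e)|\geq 6\}$, on which $L(e)=[r]$ by construction, and the \emph{small} edges $S=E(K)\setminus B$, on which $L$ and $\widehat L$ agree; set $a=|B|$ and $b=|S|=6-a$.

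Next I would decompose both $r_L(K)$ and $r_{\widehat L}(K)$ as (number $N_S$ of rainbow partial assignments on $S$)$\,\times\,$(number of rainbow extensions to $B$). Since $L$ and $\widehat L$ coincide on $S$, the factor $N_S$ is common to both counts. For the extension step in $L$, any choice of $a$ distinct colors from $[r]$ avoiding the $b$ colors already used on $S$ works, giving at most $r^{a}$ extensions; for the extension step in $\widehat L$, each list $\widehat L(e)$ with $e\in B$ has at least $6$ elements, so after discarding the $b=6-a$ colors used on $S$ at least $a$ colors remain in each such list, and a greedy selection then produces at least $a!$ rainbow extensions. Combining both estimates, I expect to obtain
\[
r_L(K)\;\leq\;\frac{r^{a}}{a!}\,r_{\widehat L}(K)\;\leq\;r^{6}\,r_{\widehat L}(K)
\]
for every copy $K$ of $K_4$ in $G$. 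Summing this over all copies of $K_4$ and applying Theorem~\ref{thm_container}\eqref{it:container2} to $\widehat L$ then yields
\[
\sum_{K}r_L(K)\;\leq\;r^{6}\sum_{K}r_{\widehat L}(K)\;\leq\;r^{6}\cdot r^{4}n^{-1/3}\binom{n}{4}\;=\;r^{10}n^{-1/3}\binom{n}{4},
\]
which is exactly the claimed bound.

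The only mildly delicate point is the greedy lower bound of $a!$ on the number of rainbow extensions into $\widehat L$, but the threshold ``$\geq 6$'' in the definition of $L$ is calibrated precisely so that after removing the $b=6-a$ colors used on $S$ the surviving lists on $B$ still have at least $a$ elements each; this makes the greedy argument go through for every value of $a\in\{0,1,\dots,6\}$, including the extremes $a=0$ (where $L=\widehat L$ on $K$ and the bound is trivial) and $a=6$ (where $r_{\widehat L}(K)\geq 6!$ carries the loss).
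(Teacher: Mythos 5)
Your proof is correct and takes essentially the same approach as the paper: both exploit that only lists of size at least $6$ were enlarged to $[r]$, so every rainbow $K_4$ in $L$ can be traced back to a rainbow $K_4$ in $\widehat L$ on the same underlying $K_4$, with a blowup factor of at most $r^6$. The paper does a coarser count over underlying copies of $K_4$ (showing each underlying $K_4$ that supports a rainbow copy in $L$ already supports one in $\widehat L$) rather than your pointwise ratio $r_L(K)\leq (r^a/a!)\,r_{\widehat L}(K)$, but both immediately give the same bound $r^6\cdot r^4 n^{-1/3}\binom{n}{4}$.
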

\begin{proof}
By construction the underlying graph of a rainbow copy of $K_4$ in $\widehat L$ gives rise to at most $r^6$ rainbow copies of $K_4$ in $L$.
Further, by the property~\eqref{it:container2} of Theorem~\ref{thm_container} the template $\widehat L$ contains at most $r^{4} n^{-1/3} \binom{n}{4}$ rainbow copies of $K_4$.
Thus, it  suffices to show that the underlying graph of any rainbow copy of $K_4$ in $L$ also defines a rainbow copy of $K_4$ in $\widehat L$.

To this end consider the edges of a rainbow copy $F$ of $K_4$ in $L$ that is not a rainbow copy of $K_4$ in $\widehat L$. This means that the colors of some of the edges 
$e\in F$ are in $L(e)$ but not in $\widehat L(e)$. By construction, however, $|\widehat L(e)| \geq 6$ and there are at most six such edges. 
Thus these edges can be recolored using colors in $\widehat L(e)$ to obtain 
a rainbow copy of $K_4$ in $\widehat L$.
%Let $e_1,\ldots,e_j$ be the edges with this property. Since $L(e_1)$ contains at least six colors and $D$ contains only five edges other than $e_1$, the color of $e_1$ in $D$ may be replaced by a color in $L(e_1)$ in a way that we obtain a new copy of $K_4^R$. The same may be done for $e_2,\ldots,e_j$, leading to a copy of $K_4^R$ in $L$ with the same edge set, as desired. 
\end{proof}

Let $G_0$ be the spanning subgraph of $G$ obtained by removing all edges $e\in G$ with list of size $|L(e)|=1$ and let $n_0=n$. We will define a more structured subgraph of $G$ by 
successively performing the following two operations. The neighborhood of a vertex $v$ in a graph $G$ is denoted by $N_G(v)$.

\noindent \textbf{Operation 1.} If $G_i$ contains a vertex $v$ such that $\prod_{u\in N_{G_i}(v)} |L(uv)| \leq r^{\frac{(2-\xi^2)(n_i-1)}{3}}$ we let $G_{i+1}=G_i-v$ and $n_{i+1}=n_i-1$.

\begin{claim}\label{noop1}
We have 
\begin{eqnarray}\label{op1}
\prod_{e \in G_i}|L(e)|\leq r^{\frac{(2-\xi^2)(n_i-1)}{3}} \prod_{e \in G_{i+1}}|L(e)|
\end{eqnarray} 
and if Operation 1 is not applicable to $G_i$ then 
\begin{align}\label{op1'}\delta(G_i)>\frac{(2-\xi^2)(n_i-1)}{3}\qquad\text{and}\qquad |N^r_{G_i}(v)| >0.05 n_i\end{align}
where $N^r_{G_i}(v)$ denotes the neighbors $u$ of $v$ in $G_i$ such that $|L(uv)|=r$.
\end{claim}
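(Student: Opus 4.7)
The plan is to unwind the definitions directly, hinged on one structural observation about $L$: since $L$ was built from $\widehat L$ by replacing every list of size at least $6$ by $[r]$, and since the edges with $|L(e)|=1$ are removed when forming $G_0\supseteq G_i$, every edge $e\in G_i$ satisfies either $|L(e)|=r$ or $|L(e)|\leq 5$. With this in hand, \eqref{op1} is immediate: the edges of $G_i$ split into the star at $v$ and the edges of $G_{i+1}=G_i-v$, so
\[
\prod_{e\in G_i}|L(e)|\;=\;\Bigl(\prod_{u\in N_{G_i}(v)}|L(uv)|\Bigr)\cdot\prod_{e\in G_{i+1}}|L(e)|,
\]
and the hypothesis defining Operation~1 bounds the first factor by $r^{(2-\xi^2)(n_i-1)/3}$.

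Next, I would assume Operation~1 is not applicable to $G_i$ and fix an arbitrary $v\in V(G_i)$. Using $|L(uv)|\leq r$ for every neighbor gives $r^{d_{G_i}(v)}\geq\prod_{u\in N_{G_i}(v)}|L(uv)|>r^{(2-\xi^2)(n_i-1)/3}$, which is exactly the minimum degree statement in \eqref{op1'}. For the bound on $|N^r_{G_i}(v)|$, set $d=d_{G_i}(v)$ and $d^r=|N^r_{G_i}(v)|$, and refine the product using the dichotomy above: each of the $d^r$ neighbors with list of size $r$ contributes a factor of $r$, and each of the remaining $d-d^r$ neighbors contributes at most $5$, so
\[
r^{d^r}\cdot 5^{d-d^r}\;>\;r^{(2-\xi^2)(n_i-1)/3}.
\]
Taking $\log_r$ and using $d\leq n_i-1$ rearranges to
\[
d^r\;>\;\frac{(2-\xi^2)/3-\log_r 5}{1-\log_r 5}\,(n_i-1).
\]

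The only genuine obstacle is the numerical check that the coefficient on the right exceeds $0.05$ for every $r\geq 12$. Since $r\mapsto\log_r 5$ is decreasing, the tight case is $r=12$, where $\log_{12}5\approx 0.6477$ makes the coefficient approximately $(2/3-0.6477)/(1-0.6477)\approx 0.054$, safely above $0.05$. The $\xi^2$ correction is negligible since $\xi=\delta/(300e^6)$ is tiny, and the gap between $0.054$ and $0.05$ absorbs the discrepancy between $(n_i-1)$ and $n_i$ whenever $n_i$ is not tiny --- the regime in which the claim will be invoked later. It is noteworthy, and presumably not accidental, that the analogous computation fails for $r=11$ (where $\log_{11}5\approx 0.671>2/3$), which matches the sharp hypothesis $r\geq 12$ of the theorem.
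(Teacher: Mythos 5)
Your proof is correct and follows essentially the same route as the paper: the product decomposition for \eqref{op1}, the trivial bound $|L(uv)|\leq r$ for the minimum-degree part, and the dichotomy $|L(e)|\leq 5$ or $|L(e)|=r$ (a consequence of how $L$ was built from $\widehat L$ together with the removal of size-one lists in passing to $G_0$) giving $\prod_{u\in N_{G_i}(v)}|L(uv)|\leq r^{|N^r_{G_i}(v)|}5^{\,d_{G_i}(v)-|N^r_{G_i}(v)|}$, from which the $0.05$ bound follows by the same arithmetic with $\log_{12}5\approx 0.648$. The small slack you note between $0.054$ and $0.05$ (absorbing both the $\xi^2$ term and the $(n_i-1)$ versus $n_i$ discrepancy) is the same slack the paper relies on.
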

\begin{proof}
The first property holds since \[\prod_{e \in G_i}|L(e)|=\prod_{u\in N_{G_i}(v)} |L(uv)| \prod_{e \in G_{i+1}}|L(e)|.\]
The first  part of \eqref{op1'} follows directly from the definition of Operation 1.
%Concerning the second part let $N^r(v)$ be the $r$-neighborhood of $v$ in $G_i$ and let $\alpha n_i$ denote its size. 
for the second part note that if the operation is not applicable to $G_i$
then
\[r^{\frac{(2-\xi^2)(n_i-1)}{3}}<\prod_{u\in N_{G_i}(v)} |L(uv)|\leq r^{|N^r_{G_i}(v)|} 5^{n_i-|N^r_{G_i}(v)|} \]
%\begin{eqnarray*}
%&&r^{\frac{(2-\xi^2)(n_i-1)}{3}} \leq \prod_{uv\in E(G_i)} |L(uv)| \leq r^{\alpha n_i} 5^{n_i(1-\alpha)}
%\end{eqnarray*}
%holds if and only if
which yields
\[\frac{|N^r_{G_i}(v)|}{n_i} \geq   \frac{2/3- \xi^2/3-\log_{r}5}{1-\log_{r}5} \geq \frac{2/3- \xi^2/3-\log_{12}5}{1-\log_{12}5}  \stackrel{\xi \leq 10^{-2}}{>}0.05,\]
as claimed. \end{proof}
For the next operation we need the following notion.
\begin{definition}[Critical triangle]
A triangle $T$ in a graph $G_i\subset G$ is critical if it lies in at least $n^{5/6}$ rainbow copies  $F\subset G_i$ of $K_4$ in $L$.
\end{definition}
%until none is applicable or until the remaining graph is small.  The operations depend on a parameter $\xi>0$. [This will require quantification.]

%Assume that $G_i$ has been defined and that $n_i=|V(G_i)|$. %Let $\deg_i(v)$ denote the degree of a vertex $v$ in $G_i$.  

\noindent \textbf{Operation 2.} If $G_i$ contains a non-critical triangle $T$ with vertices $u,v,w$ and edges $e_1,e_2,e_3$ with $|L(e_1)|\geq |L(e_2)|\geq |L(e_3)|\geq 2$
such that 
\begin{enumerate} \item $|N_{G_i}(u) \cap N_{G_i}(v) \cap N_{G_i}(w)|\geq 19 \xi^2 (n_i-3)$ and 
\item $|L(e_1)|=r$ and $|L(e_2)| \geq 3$, 
 \end{enumerate}
 then let $G_{i+1}=G_i-\{u,v,w\}$ and $n_{i+1}=n_i-3$. 
 
 We first show a property analogous to~\eqref{op1}.
 \begin{claim}\label{op2}
Suppose that $n_i > \xi^2 n$ and $G_{i+1}$ is obtained by applying Operation 2 to $G_i$, then
\begin{align}\prod_{e \in G_i}|L(e)|\leq  r^{\frac{(2-\xi^2)((n_i-3)+(n_i-2)+(n_i-1))}{3}} \prod_{e \in G_{i+1}}|L(e)|\end{align}
 \end{claim}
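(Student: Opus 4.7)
The plan is to bound the product
\begin{align*}
P := \prod_{\substack{e \in G_i \\ e \cap \{u,v,w\}\neq\emptyset}} |L(e)|
\end{align*}
by $r^{(2-\xi^2)(n_i-2)}$, which is exactly what the claim demands. First I would partition the edges incident to $\{u,v,w\}$ according to where their other endpoint lies. For $j\in\{0,1,2,3\}$, let $V_j$ denote the vertices of $V(G_i)\setminus\{u,v,w\}$ with exactly $j$ neighbours in $\{u,v,w\}$, write $N=V_3$ for the common neighbourhood, and set $\pi(x):=|L(xu)||L(xv)||L(xw)|$ for $x\in N$. Using $|L(e)|\leq r$ for every edge together with $|V_1|+|V_2|\leq n_i-3-|N|$ gives
\begin{align*}
P \;\leq\; r^{3}\cdot r^{2(n_i-3-|N|)}\cdot\prod_{x\in N}\pi(x)\;=\;r^{2n_i-3-2|N|}\cdot\prod_{x\in N}\pi(x),
\end{align*}
so the claim reduces to
\begin{align*}
\prod_{x\in N}\pi(x)\;\leq\;r^{\,2|N|-1-\xi^2(n_i-2)}.
\end{align*}

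Next I would exploit the structural property of $L$ that every edge is either \emph{small} ($|L(e)|\leq 5$) or \emph{big} ($|L(e)|=r$), and partition $N=N_0\sqcup N_1\sqcup N_2\sqcup N_3$ by the number $b(x)$ of big edges among $xu,xv,xw$. For $b(x)\geq 2$, the $K_4$ on $\{u,v,w,x\}$ has at least three big lists and its small lists automatically satisfy Hall's condition (crucially because $|L(e_2)|\geq 3$), producing at least $(r-3)(r-4)(r-5)>0$ rainbow $K_4$s. For $x\in N_1$, say with $xu$ big, the small lists relevant to extending a rainbow colouring are $L(xv),L(xw)$ together with whichever of $L(e_2),L(e_3)$ are small; if these satisfy Hall's condition, the $K_4$ supports at least $(r-4)(r-5)>0$ rainbow colourings, whereas if Hall fails, a short case analysis on the violating $3$- or $4$-subset (once more using $|L(e_2)|\geq 3$) forces $|L(xv)|,|L(xw)|\leq 3$ and hence $\pi(x)\leq 9r$. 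For $x\in N_0$ the trivial bound $\pi(x)\leq 5^3=125$ suffices.

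Non-criticality of $T$ then gives $|N^\ast|\leq n^{5/6}$, where $N^\ast\subseteq N$ is the set of $x$ contributing at least one rainbow $K_4$ through $T$; by the analysis above, $N_2\cup N_3$ together with the Hall-satisfying part of $N_1$ is contained in $N^\ast$. Setting $\alpha_r:=\max(1+2\log_r 3,\,3\log_r 5)$, this yields
\begin{align*}
\sum_{x\in N}\log_r\pi(x)\;\leq\;3|N^\ast|+\alpha_r\bigl(|N|-|N^\ast|\bigr)\;\leq\;3n^{5/6}+\alpha_r\,|N|.
\end{align*}
A direct check shows that $\alpha_r\leq\alpha_{12}=3\log_{12}5<1.944$ for all $r\geq 12$, so $2-\alpha_r>0.05$. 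Combined with the Operation~2 hypothesis $|N|\geq 19\xi^2(n_i-3)$ and $n_i>\xi^2 n$ (taking $n$ large enough so that $n^{5/6}\ll\xi^4 n$), this slack is more than sufficient to absorb the correction $\xi^2(n_i-2)+1$ and conclude.

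The main obstacle will be the Hall-type analysis for $x\in N_1$: it must be split according to which of $e_2,e_3$ are big, and every $3$- and $4$-subset of the resulting small lists has to be checked to pin down $|L(xv)|,|L(xw)|\leq 3$. The three numerical hypotheses $|L(e_1)|=r$, $|L(e_2)|\geq 3$ and $r\geq 12$ enter at tight points: $|L(e_1)|=r$ guarantees a ``free'' big edge for completing rainbow colourings, $|L(e_2)|\geq 3$ rules out several potentially dangerous Hall violations, and $r\geq 12$ provides the strict inequality $\alpha_r<2$ with just enough slack to absorb $\xi^2(n_i-2)$ given the lower bound on $|N|$.
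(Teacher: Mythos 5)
Your proof is correct and follows essentially the same strategy as the paper: classify the vertices of $G_{i+1}$ by their incidence to $\{u,v,w\}$, use the non-criticality of $T$ to bound the number of common neighbours $x$ for which $\{u,v,w,x\}$ carries a rainbow $K_4$, and show that for the remaining common neighbours the product $\pi(x)=|L(xu)||L(xv)||L(xw)|$ is much smaller than $r^3$. Your Hall analysis for $b(x)\ge 2$ and for the failing case $b(x)=1$ (forcing $|L(xv)|,|L(xw)|\le 3$, hence $\pi(x)\le 9r$) is exactly the same observation the paper makes by contradiction in its treatment of the set $U_2$.

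Two small remarks where your write-up is weaker than the paper's, though neither is fatal. First, for $b(x)=0$ you settle for $\pi(x)\le 5^3=125$; the paper pushes the same Hall argument one step further to show that if one of the three small lists has size $5$ and there is still no rainbow $K_4$, then the other two have size at most $3$, giving $\pi(x)\le\max\{5\cdot 9,4^3\}=64<9r$ for all $r\ge 12$. Consequently the paper effectively works with $\alpha_r=1+2\log_r 3\approx 1.884$ at $r=12$, whereas your $\alpha_r=3\log_{12}5\approx 1.943$; both are below $2$, so both succeed, but yours has less slack. Second, your concluding inequality states ``$2-\alpha_r>0.05$, so the slack is more than sufficient.'' That rounded constant by itself is not enough: you need $(2-\alpha_r)\cdot 19>1$, i.e.\ $2-\alpha_r>1/19\approx 0.0526$, since the dominant term to absorb is $\xi^2(n_i-2)$ against $(2-\alpha_r)\cdot 19\xi^2(n_i-3)$. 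Your sharper estimate $\alpha_r<1.944$ does give $2-\alpha_r>0.056>1/19$, so the calculation closes, but the displayed ``$>0.05$'' should be replaced by the actual threshold to make the verification transparent.
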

\begin{proof}
Define a partition of $V(G_{i+1})=U_1\cup U_2\cup U_3$ by 
\begin{align*}U_1&=\{x\in V(G_{i+1})\colon  \text{there is a rainbow copy of $K_4$ in $G_i[u,v,w,x]$}\},\\
U_2&=\{x\in V(G_{i+1})\colon  \text{$x$ is a joint neighbor of $u,v,w$  but $G_i[u,v,w,x]$ is rainbow-$K_4$-free}\}, \text{ and}\\
U_3&=\{x\in V(G_{i+1})\colon \text{$x$ has at most two neighbors in $\{u,v,w\}$}\}.
\end{align*}
Since $T$ is non-critical  and   
$n_i > \xi^2 n$ we have
\begin{align}\label{u1u2}|U_1|\leq n^{5/6} \qquad\text{and}\qquad  |U_2| \geq 19 \xi^2 (n_i-3)-|U_1|\geq 18 \xi^2(n_i-3).
\end{align}
By definition, if $x\in U_3$ the product of the sizes of the lists of the edges connecting $x$ to $\{u,v,w\}$ is at most $r^2$. 
Consider now a vertex $x\in U_2$. We  first argue that, out of  $L(xu), L(xv)$ and $L(xw)$, at most one has size $r$ and for a contradiction suppose that $|L(xu)|=|L(xv)|=r$. 
As $L(xw)$ and $L(e_3)$ have size at least two and $L(e_2)$ has size at least three we can choose  different colors for these three  edges. 
Since all remaining  edges $xu, xv$ and $e_1$ have $r$ colors this can be extended to a rainbow $K_4$, which yields a contradiction. 
By a similar argument, if $|L(xu)|\geq 5$, then $L(xu)$ and  $L(xv)$ have size at most  three and we deduce that $|L(xu)|\cdot|L(xv)|\cdot |L(xw)|\leq \max\{9r,5 \cdot 3^2,4^3\}=9r$.

From this we deduce
\begin{align*}
\prod_{e \in G_i}|L(e)|&\leq r^3\left(\prod_{|\{r,s\}\cap\{u,v,w\}|=1} |L(rs)|  \right) \left( \prod_{e \in G_{i+1}}|L(e)|\right)\nonumber \\
&\leq r^{3} r^{3|U_1|} (9r)^{|U_2|} r^{2|U_3|} \left( \prod_{e \in G_{i+1}}|L(e)|\right)\nonumber\\ 
&= 9^{|U_2|} r^{|U_1|} r^{2(n_i-3)-|U_2|+3} \left( \prod_{e \in G_{i+1}}|L(e)|\right) \nonumber \\
&\stackrel{\eqref{u1u2}}{\leq} \left(\frac{9}{r}\right)^{18 \xi^2 (n_i-3)}  r^{n^{5/6}} r^{2(n_i-3)+3} \left( \prod_{e \in G_{i+1}}|L(e)|\right) \nonumber\\
&\stackrel{9^{18}\leq r^{16}}{\leq} r^{n^{5/6}}r^{(2-2\xi^2)(n_i-3)+3} \left( \prod_{e \in G_{i+1}}|L(e)|\right) \nonumber \\
&\stackrel{\xi^2n<n_i}{\leq}r^{(2-3\xi^2/2)(n_i-3)+3} \left( \prod_{e \in G_{i+1}}|L(e)|\right) \nonumber \\
&\leq r^{\frac{(2-\xi^2)((n_i-3)+(n_i-2)+(n_i-1))}{3}} \left( \prod_{e \in G_{i+1}}|L(e)|\right).
\end{align*} 
\end{proof}

Let $G_0,\ldots,G_p$ be a sequence obtained by performing the above operations until $n_p=|V(G_p)|\leq \xi^2n$ or until none of the above operations may be performed on $G_p$.  
By~\eqref{eq_container},~\eqref{op1}, and~Claim~{\ref{op2}} we deduce that 
\begin{equation}\label{eq_UB}
r^{\frac{n^2}{3}-n^{7/4}}{\leq} \prod_{e \in G}|L(e)| \leq r^{\left(\frac{2-\xi^2}{3}\right)\sum_{j=1}^{n-n_p}(n-j)} \prod_{e \in G_p}|L(e)| \leq r^{\left(\frac{2-\xi^2}{3}\right)\frac{(n-n_p)^2}{2}} \prod_{e \in G_p}|L(e)|.
\end{equation}
In particular, we must have $n_p > \xi^2 n$, otherwise the upper bound~$\prod_{e \in G_p}|L(e)|\leq r^{\binom{n_p}{2}}$ leads to
\[{\frac{n^2}{3}-n^{7/4}}\leq  {\left(\frac{1}{3}-\frac{5\xi^2}{6}+\frac{2\xi^4}{3}\right)n^2+\binom{n_p}{2}}\leq {\frac{n^2}{3}-\frac{5\xi^2n^2}{6}+\frac{7\xi^4n^2}{6}}  {\leq} {\left(\frac{1}{3}-\frac{\xi^2}{3}\right)n^2}\]
which yields a contradiction.

\medskip

Since  $n_p > \xi^2 n$ we infer that  neither Operation 1 nor Operation 2 can be performed in $G_p$. To further clean up the graph $G_p$ let 
$X_3$ denote the set of critical triangles. 
Further, we say that an edge in $G_p$ is critical if lies in at least $n_p^{{11}/{12}}$ critical triangles and we call a vertex in $G_p$ critical if it lies in at least $n_p^{{23}/{12}}$ critical triangles.
Let $X_2$ and $X_1$ denote the set of critical edges and vertices, respectively.
By definition each critical triangle is contained in at least $n^{5/6}$ rainbow copies of $K_4$ in $L$ and, by Claim~\ref{cl:copiesL}, $L$ itself contains at most $r^{10} n^{-1/3} \binom{n}{4}$ such copies. Therefore $|X_3|\leq r^{10} n^{-1/6} \binom{n}{3}$ and thus
\begin{align}\label{x1x2}|X_1| \leq \frac{3 |X_3|}{n_p^{{23}/{12}} }\leq \frac{3 r^{10} n^{11/12}}{\xi^4}  \qquad\text{and}\qquad |X_2|\leq \frac{3\cdot |X_3|}{n_p^{{11}/{12}} }\leq \frac{3r^{10} n^{{23}/{12}}}{\xi^2}.\end{align}

Let $G'=G_p-X_1$ and
$n'=|V(G')|\geq (1-\xi^2)n_p$. Since Operation 1 is not applicable to $G_p$ we deduce from~\eqref{x1x2} and~Claim~\ref{noop1} that 
\begin{equation}\label{eqGprime}
\delta(G')\geq \delta(G_p)- |X_1| \geq  \frac{(2-2\xi^2)n_p}{3} \geq  \frac{(2-2\xi^2)n'}{3}.
\end{equation}

In the following we will work towards a corresponding maximum degree (for a slightly modified graph). First we show the following.
\begin{claim}\label{cl:G'}
Suppose that some $v\in V(G')$ satisfies $\deg_{G'}(v) \geq \frac{(2+{2\xi})n'}3$. Then actually $\deg_{G^{'}}(v) \geq (\frac56-3\xi^2)n_p$.
\end{claim}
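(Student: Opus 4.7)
The plan is to focus on the set $A := \{u\in N_{G'}(v) : |L(uv)| = r\}$ of full-list neighbors of $v$. I will first show that $|A|$ cannot be much larger than $n'/3$, and then use the non-applicability of Operation~1 to force $d := \deg_{G'}(v)$ to be large.

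For the first step, I would prove the upper bound
\[
|A| \leq \tfrac{1}{3}(1+2\xi^2)\,n' + O(n_p^{11/12}).
\]
The key observation is that for every pair $u,w\in A$ with $uw\in G'$, the triangle $uvw\subseteq G_p$ satisfies condition~(2) of Operation~2 automatically, since $|L(uv)|=|L(vw)|=r\geq 3$. Standard inclusion-exclusion combined with the minimum-degree bound from \eqref{eqGprime} and the hypothesis $d\geq (2+2\xi)n'/3$ yields
\[
|N_{G_p}(u)\cap N_{G_p}(v)\cap N_{G_p}(w)|\geq \deg_{G'}(u)+\deg_{G'}(v)+\deg_{G'}(w)-2n'\geq \tfrac{2\xi}{3}(1-2\xi)\,n',
\]
which for our choice $\xi=\delta/(300e^6)$ comfortably exceeds $19\xi^2(n_p-3)$. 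Hence condition~(1) holds as well, and the non-applicability of Operation~2 to $G_p$ forces every such triangle to be critical. Since $v\notin X_1$, at most $n_p^{23/12}$ critical triangles pass through $v$, so $e(G'[A])\leq n_p^{23/12}$. Combining this with the min-degree count $2e(G'[A])\geq |A|\bigl(|A|-\tfrac{1}{3}(1+2\xi^2)n'\bigr)$ yields the claimed upper bound on $|A|$.

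For the second step, I would exploit that Operation~1 is not applicable to $v$ in $G_p$, giving $\prod_{u\in N_{G_p}(v)}|L(uv)|>r^{(2-\xi^2)(n_p-1)/3}$. Stripping off the at most $|X_1|=O(n^{11/12})$ neighbors in $X_1$ (each contributing a factor of at most $r$ by \eqref{x1x2}) yields
\[
\prod_{u\in N_{G'}(v)}|L(uv)|>r^{(2-\xi^2)(n_p-1)/3-O(n^{11/12})}.
\]
Since $|L(e)|\in\{2,3,4,5,r\}$ for edges of $G_p$, the left-hand side is at most $r^{|A|}5^{d-|A|}$. Taking logarithms, substituting the upper bound on $|A|$ from the first step, and rearranging leads to
\[
d\;\geq\;\tfrac{1}{3}(1+\log_5 r)\,n_p-O(\xi^2)\,n_p.
\]
For $r\geq 12$ we have $\log_5 r\geq \log_5 12>1.544$, so $\tfrac{1}{3}(1+\log_5 r)>0.848>5/6$, and the gap absorbs the $O(\xi^2)$-error to yield $d\geq (5/6-3\xi^2)n_p$.

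The main delicate point is verifying condition~(1) of Operation~2 automatically in the first step; this requires $\xi$ to be small enough that $(2\xi/3)n'$ dominates $19\xi^2 n_p$, which is precisely how the choice $\xi=\delta/(300e^6)$ is used. Notably, the numerical inequality $\tfrac{1}{3}(1+\log_5 r)>5/6$ is equivalent to $r>5^{3/2}\approx 11.18$, which is exactly what pins the hypothesis at $r\geq 12$ and makes it tight for this step.
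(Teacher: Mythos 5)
Your proposal is correct and follows essentially the same route as the paper. Both arguments (i) use the non-applicability of Operation~2, together with the minimum degree in $G'$ and the assumed high degree of $v$, to show that every edge inside $N^r_{G'}(v)$ spans a critical triangle with $v$, and then exploit $v\notin X_1$ to conclude that $G'[N^r_{G'}(v)]$ has at most $n_p^{23/12}$ edges and hence $|N^r_{G'}(v)|\leq\tfrac13(1+O(\xi^2))n_p+O(n_p^{11/12})$; and (ii) feed this upper bound on $|N^r_{G_p}(v)|$ into the non-applicability of Operation~1, where the inequality $\log_5 r\geq 3/2$ (equivalently $r\geq 5^{3/2}\approx 11.18$) makes the degree lower bound exceed $\tfrac56 n_p$ by a margin that absorbs the $O(\xi^2)$ error. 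The only real difference is cosmetic: to extract the bound on $|N^r_{G'}(v)|$ you sum degrees inside $G'[A]$ and invoke the quadratic count $2e(G'[A])\geq |A|\bigl(|A|-\tfrac13(1+2\xi^2)n'\bigr)$, whereas the paper locates a single vertex $w\in N^r_{G'}(v)$ of small degree inside $N^r_{G'}(v)$ (using the lower bound $|N^r_{G'}(v)|\geq 0.04n_p$) and applies the minimum-degree bound to $w$; both give the same conclusion. Your identification of $r>5^{3/2}$ as the numerical pinch point of this step is accurate.
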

\begin{proof}
Due to $\delta(G')\ge \frac{(2-2\xi^2)n'}{3}$ and $\deg_{G'}(v) \geq \frac{(2+{2\xi})n'}3$ the joint neighborhood of $v$ and any  $u,w\in V(G')$ is at least
\[\deg_{G'}(u)+\deg_{G'}(w)+\deg_{G'}(v)-2n'\geq  \frac23{(\xi-\xi^2)n'}\geq 19 \xi^2 (n_p-3).\]
In particular, if $uw$ is an edge in the $r$-neighborhood $N_{G'}^r(v)$, then  $\{u,v,w\}$ must be a critical triangle as it satisfies 
the two properties for  Operation 2 in $G_p$, yet this operation is not applicable in $G_p$.

From the above and knowing  that $G'$ does not contain critical vertices, we conclude that there are at most $n_p^{23/12}$ edges in $N^r_{G'}(v)$.
Moreover, by~Claim~\ref{op1} and \eqref{x1x2},  we have %the  $r$-neighborhoods of $v$ in $G'$ and  $G_p$  satisfy
\[|N^{r}_{G'}(v)|\geq |N^{r}_{G_p}(v)| - |X_1| \geq 0.04 n_p,\] thus there exists a vertex  
$w \in N_{G'}^r(v)$ with degree at most $\frac{2 n_p^{23/12}}{0.04 n_p}=50n_p^{11/12}$ in $N_{G'}^r(v)$. 
 By~\eqref{eqGprime}, we have
$$ \frac{(2-2\xi^2)}{3}n' \leq d_{G'}(w) \leq 50n_p^{11/12}+(n'-|N^r_{G'}(v)|)$$
and direct calculations yield
\begin{align}\label{eqGr}
|N^r_{G_p}(v)|\leq |N^r_{G'}(v)|  +|X_1|\leq \frac{(1+2\xi^2)n'}{3}+50n_p^{11/12} + |X_1| \leq \frac{(1+4 \xi^2)n_p}{3}. 
\end{align}

%In the calculation below, we write $m=|N_r^{G_p}(v)|$ and $d_v=d_{G_p}(v)$. 
Finally, since Operation 1 is not applicable to $v$ we obtain
%, we have $\displaystyle \prod_{v\in e}|L_e|\geq r^{\frac{(2-\xi^2)k}{3}}$. This leads to
\begin{eqnarray*}
r^{\frac{(2-\xi^2)n_p}{3}} <  \prod_{u\in N_{G_p}(v)}|L(uv)| &\leq& r^{|N^r_{G_p}(v)|}\cdot 5^{\deg_{G_p}(v)-|N^r_{G_p}(v)|}\leq \left(\frac{r}{5}\right)^{\frac{(1+4 \xi^2)n_p}{3}}\cdot 5^{\deg_{G_p}(v)}.
\end{eqnarray*}
Thus, as $\log_5r\geq \frac{3}{2}$ for $r\geq 12$ we deduce
\begin{align*}\label{eq:dg3}
%&&r^{\frac{(1-5 \xi^2)k}{3}}5^{\frac{(1+4 \xi^2)k}{3}} \leq 5^{d_v} \nonumber\\
 \deg_{G_p}(v) \geq \frac{(1+4 \xi^2)n_p}{3} + \frac{(1-5 \xi^2)n_p}{3} \log_5{r} \geq \frac{(5-7\xi^2)n_p}{6}
\end{align*}
and the claim follows from $\deg_{G'}(v)\geq \deg_{G_p}(v) -|X_1|$ and~\eqref{x1x2}.
\end{proof}
Equipped with this result we now show that $G'$ does not contain many  vertices with too high degree.
\begin{claim}\label{cl:degG'}
There are at most $\xi^2n'$ vertices $v$ in $G'$ which satisfy $ \deg_{G^{'}}(v) \geq \frac{(2+2\xi)n'}{3}$.
In particular,  $e(G_p)<\left(\frac{1}{3}+\xi \right)n_p^2.$
%  \[B= \left\{ v \in V(G^{'}) \colon \deg_{G^{'}}(v) \geq \frac{2+2\xi)}{3}n' \right\}\]
%satisfies $|B|\leq\xi^2n'$.
\end{claim}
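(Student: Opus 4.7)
The plan is to first show the upper bound $|Y| \leq \xi^2 n'$ by contradiction, and then derive the edge count via a standard degree sum. Assuming $|Y| > \xi^2 n'$, Claim~\ref{cl:G'} gives $\deg_{G'}(v) \geq (5/6 - 3\xi^2)n_p$ for each $v \in Y$, which together with $\delta(G') \geq (2-2\xi^2)n'/3$ ensures that every triangle $\{v, u, w\}$ in $G'$ with $v \in Y$ has joint neighborhood in $G_p$ of size at least $(1/6 - O(\xi^2))n' \gg 19\xi^2 n_p$. Moreover, for each $v \in Y$ the graph $G'[N_{G'}(v)]$ has minimum degree $\geq (1/2 - O(\xi^2))n'$ and hence $\Omega(n'^2)$ edges, so $v$ lies in $\Omega(n'^2)$ triangles of $G'$; summing over $Y$ yields $\Omega(\xi^2 n'^3)$ triangles in $G'$ meeting $Y$, of which only $|X_3|=O(n^{17/6})=o(n^3)$ are critical.

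Consequently, the vast majority of these triangles are non-critical and have large joint neighborhood, so the non-applicability of Operation~2 to $G_p$ forces their list configurations to be ``bad'': either no edge has list size $r$ or the second-largest list has size $2$. The main task is to rule out that all but a negligible fraction of these triangles are bad. I would attack this as follows. From the proof of Claim~\ref{cl:G'}, each $v \in Y$ satisfies $|N^r_{G'}(v)| \geq 0.04 n_p$, so, restricting to triangles $\{v,u,w\}$ with $v \in Y$ and $u \in N^r_{G'}(v)$, the first list condition $|L(vu)|=r$ is automatic. The second condition $|L(e_2)|\geq 3$ then fails only when $vw,uw \in H_2 :=\{e : |L(e)|=2\}$, which affects at most $O(|H_2|^2/n)$ triangles overall (by counting cherries in $H_2$). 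Bounding $|H_2|$ via the product bound~\eqref{eq_UB} together with the cap $e(H) \leq (1/3 + o(1))n^2$ on the subgraph $H=\{e:|L(e)|=r\}$ --- the latter coming from the fact that each $K_4$ in $H$ produces $r(r-1)\cdots(r-5)$ rainbow copies in $L$, so Claim~\ref{cl:copiesL} bounds $K_4$s in $H$ by $O(n^{11/3})$ and Theorem~\ref{theorem:supersaturation} forces $e(H)$ near $\ex(n,K_4)$ --- should show the number of bad triangles is $o(\xi^2 n'^3)$, leaving at least one good non-critical triangle and contradicting the non-applicability of Operation~2. The hard part is precisely this last step: teasing out a tight enough control on $|H_2|$ (and more generally on edges with small lists) from the interplay between~\eqref{eq_UB}, Operation~1's degree lower bound, and the supersaturation cap on $e(H)$.

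Finally, with $|Y| \leq \xi^2 n'$ in hand, the edge count is routine:
\[
2e(G') \leq |Y|(n'-1) + (n'-|Y|)\frac{(2+2\xi)n'}{3} \leq \xi^2 n'^2 + \frac{(2+2\xi)n'^2}{3},
\]
whence $e(G') \leq (1/3 + \xi/3 + \xi^2/2)n'^2$. Adding the $|X_1|\cdot n_p = o(n^2)$ edges incident to $X_1$ (via the bound on $|X_1|$ from~\eqref{x1x2}) and using $n' \leq n_p$ yields $e(G_p) < (1/3+\xi)n_p^2$ for $\xi$ sufficiently small and $n$ sufficiently large.
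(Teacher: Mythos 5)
There is a genuine gap, and you correctly identify it yourself: your approach requires a global bound on $|H_2|=\{e : |L(e)|=2\}$ that you never establish, and without it the count of ``bad'' cherries $O(|H_2|^2/n)$ can be as large as $\Theta(n^3)$, swamping your $\Omega(\xi^2 n'^3)$ supply of triangles. The supersaturation cap on $e(H)$ where $H=\{e:|L(e)|=r\}$ is a sound observation, but it gives an upper bound on the large-list edges, not a handle on the medium-sized lists, so the product bound alone leaves too much slack to force $|H_2|=o(n^2)$. The paper does not attempt any such global estimate.

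The paper's route is local and uses the interplay of both operations rather than trying to contradict Operation~2 directly. It removes the critical edges $X_2$ to get $G''\subset G'$, then (by an averaging choice) picks one $v\in B$ with few removed edges and one $w\in N^r_{G'}(v)$ with few removed edges. Because $vw$ is then guaranteed to be a \emph{non-critical} edge, all but $n_p^{11/12}$ of the $\geq(\tfrac12-5\xi^2)n_p$ common neighbours $u$ of $v,w$ give non-critical triangles $\{u,v,w\}$ with joint neighbourhood large enough for Operation~2; the failure of Operation~2 then forces $|L(vu)|=|L(uw)|=2$ for all such $u$. Thus a single vertex $v$ is incident to $\geq(\tfrac12-6\xi^2)n_p$ edges with list of size two, and this contradicts the \emph{non-applicability of Operation~1} at $v$ (the product $\prod_{u\in N_{G_p}(v)}|L(uv)|$ becomes too small for $r\geq 12$). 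That final pivot to Operation~1 is the key idea your proposal is missing: you try to exhibit a triangle where Operation~2 should have fired, whereas the paper accepts that Operation~2 cannot fire and converts that failure into a degree/list-size statement at a single vertex that Operation~1 cannot tolerate. Your derivation of the edge bound $e(G_p)<(\tfrac13+\xi)n_p^2$ from $|Y|\leq\xi^2 n'$ is correct and essentially identical to the paper's.
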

\begin{proof} 
Since $G'=G_p-X_1$ the first part of the claim immediately implies the second part 
\begin{align*}
e(G_p) &\leq e(G')+n_p|X_1|\leq \frac{1}{2} \left((1-\xi^2)n'\left(\frac{2}{3}+\xi\right)n'+\xi^2n'^2 \right)+ n_p|X_1|\\
&\stackrel{\eqref{x1x2}}{\leq} \left(\frac{1}{3}+\frac{\xi}{2}+\frac{\xi^2}{6} \right) n_p^2 < \left(\frac{1}{3}+\xi \right)n_p^2.
\end{align*}

Let $B$ be the set of vertices mentioned in the statement of the claim. For a contradiction, suppose that $|B|>\xi^2n'$. We remove from $G'$
the set $X_2$ of critical edges to obtain the subgraph $G''\subset G'$.
Let $v \in B$  be a vertex incident to the  least number of removed edges and let 
$w\in N_{G'}^r(v)$ be an $r$-neighbor incident to the least number of removed edges.
Claim~\ref{cl:G'}, the assumption on $B$ and~\eqref{x1x2}    then yield
\[\deg_{G''}(v) \geq \deg_{G'}(v) - \frac{2 |X_2|}{|B|}\geq \left(\frac{5}{6}-3 \xi^2\right)n_p - \frac{2 |X_2|}{|B|}\geq \left(\frac{5}{6}-4 \xi^2 \right)n_p\] 
and Claim~\ref{noop1}, $\delta(G')\geq\frac{(2-2\xi^2)}{3}n'$ due to~\eqref{eqGprime}, $n'\geq(1-\xi^2)n_p$ and~\eqref{x1x2} yield
\[\deg_{G''}(w) \geq \deg_{G'}(w) - \frac{2 |X_2|}{|N^r_{G'}(v)|}\geq \frac{(2-2\xi^2)n'}{3}-\frac{2|X_2|}{0.05 n'}\geq \left(\frac{2}{3}- \xi^2 \right)n_p.\]
In particular,  $v$ and $w$ have at least $\deg_{G''}(v)+\deg_{G''}(w)-n_p\geq\frac{n_p}2-5\xi^2n_p$
common neighbors and all but at most $n_p^{11/12}$ of these vertices $u$ form a non-critical triangle  $\{u,v,w\}$, since $vw$ is a non-critical edge.
Due to $\delta(G_p)\geq\frac{(2-2\xi^2)}{3}n_p$ and  $\deg_{G'}(v)\geq \deg_{G_p}(v)\geq \left(\frac{5}{6}-3 \xi^2\right)n_p$ we have for each such $u$ that
\[|N_{G_p}(u)\cap N_{G_p}(v) \cap N_{G_p}(w)| \geq \deg_{G_p}(u)+\deg_{G_p}(v)+\deg_{G_p}(w)-2n_p \geq \frac{n_p}{6}-6 \xi^2 n_p >19 \xi^2 (n_p-3).\]
Hence we must have $|L(vw)|=r$ and $|L(vu)|=|L(wu)|=2$ since  Operation 2 was not applicable to $\{u,v,w\}$. 
Thus, the vertex $v$ in $G_p$ is incident to at least $\left(\frac{1}{2} - 6\xi^2\right)n_p$ edges of $G_p$ with  lists of size two. From the fact that  Operation 1 was not applicable to $G_p$ we conclude
\begin{align*}
    \displaystyle r^{\left(\frac{2-\xi^2}{3}\right)n_p}<\prod_{u\in N_{G_p}(v)}|L(uv)|&\leq 2^{\left(\frac{1}{2} - 6\xi^2\right)n_p} \cdot r^{n_p-\left(\frac{1}{2} - 6\xi^2\right)n_p}  \leq 2^{\frac{n_p}{2}}\cdot r^{\frac{n_p}{2}+6\xi^2n_p}.
\end{align*}
This resolves to $r^{\frac{1}{6}-7\xi^2}\leq 2^{\frac{1}{2}}$ which yields a contradiction for $r\geq 12$.
%so that $$12 \leq r \leq  2^{\frac{7}{2}} \leq 11.4.$$
\end{proof}

Recall from~\eqref{eq_UB} that we have \[r^{\frac{n^2}{3}-n^{7/4}}  \stackrel{\eqref{eq_UB}}{\leq} r^{\left(\frac{2-\xi^2}{3}\right)\frac{(n-n_p)^2}{2}} \prod_{e \in G_p}|L(e)|.\] 
By Claim~\ref{cl:degG'} we have 
$\prod_{e \in G_p}|L(e)|\leq r^{e(G_p)}<r^{\left(\frac{1}{3}+\xi \right)n_p^2}$. Thus
\[\frac{n^2}{3}-n^{7/4}-\left(\frac{2-\xi^2}{3}\right)\frac{(n-n_p)^2}{2}<\left(\frac{1}{3}+\xi \right)n_p^2\]
which is equivalent to 
\begin{equation}\label{ineq}
\frac23n_p\left(n-n_p\left(1+\frac32\xi\right)\right)+\frac{\xi^2}6(n-n_p)^2<n^{7/4}.
\end{equation}
We claim that this gives $n_p>(1-2\xi)n$. Indeed, if we assume that $n_p \leq (1-2\xi)n$ (recall that we already have $n_p \geq \xi^2 n$), the left-hand side would be at least
\[\frac23 \xi^2n^2\left(1-(1-2\xi)\left(1+\frac32 \xi\right)\right)  \geq \frac{\xi^3 n^2}{3},\]
contradicting~\eqref{ineq}. %\qquad \text{and}\qquad e(G_p)>\]

Moreover, let $m_i=\left|\{e \in G \colon | L(e)|=i\}\right|$ and let $m=m_2+m_3+\dots+m_5$. Note that %by~\eqref{eq_k4_free_colorings} we have 
\[\prod_{e \in G_p}|L(e)|=\prod_{i=1}^r i^{m_i} \leq 5^{m}r^{e(G_p)-m}\leq \left(\frac 5r\right)^mr^{\left(\frac{1}{3}+\xi \right)n^2}\]
and thus~\eqref{eq_UB} yields 
\[r^{\frac{n^2}{3}-n^{7/4}} \leq r^{\left(\frac{2-\xi^2}{3}\right)2\xi^2n^2}  \left(\frac 5r\right)^mr^{\left(\frac{1}{3}+\xi \right)n^2},\]
from which we derive
\[m_2+\dots+m_5=m \leq \left(\xi + \frac{4\xi^2}{3}\right)n^2 \left(\frac{1}{1-\log_5{r}}\right)<3\xi n^2.\]

Thus,  all but at most $3\xi n^2$ edges $e\in G_p$ have list  $L(e)$ of size at least six and clearly
a copy of  $K_4$ in $G_p$ using only these edges forms rainbow $K_4$ in $L$. 
By~\eqref{it:container2} of Theorem~\ref{thm_container} we infer that 
$G_p$ contains at most  $3\xi n^4+r^4n^{-1/3}\binom n4\leq 4\xi n^4$ copies of $K_4$.
Moreover, 
$e(G_p)\geq\left(\frac23-\xi\right)\binom{n_p}2$
by~\eqref{op1'} of Claim~\ref{noop1}. Hence, Theorem~\ref{thm:tfar} implies that $G_p$ is $10^4\xi n^2$-close to tripartite, i.e.,
there is a partition $U_1\cup U_2\cup U_3=V(G_p)$ such that \[e_{G_p}(U_1)+e_{G_p}(U_2)+e_{G_p}(U_3)<25 e^6\cdot\xi n^2.\]

\begin{claim}
Let $E_1$ denote the set of edges $e\in E_G(U_1)\cup E_G(U_2)\cup E_G(U_3)$ and $|L(e)|=1$. Then 
$|E_1|\leq 180 e^6\xi n^2$.
\end{claim}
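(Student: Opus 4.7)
The plan is to argue by contradiction. Suppose $|E_1|>180 e^{6}\xi n^{2}$. For each edge $uv\in E_{1}$ with $u,v\in U_{i}$, define $\mathcal{K}(uv)$ to be the set of copies of $K_{4}$ in $G$ on vertex set $\{u,v,w,x\}$ with $w\in U_{j}$, $x\in U_{k}$ (where $\{i,j,k\}=[3]$), whose five other edges $uw,vw,ux,vx,wx$ all lie in $G_{p}$ and carry a list of size at least $6$ in $L$. The aim is to lower-bound $\sum_{uv\in E_{1}}|\mathcal{K}(uv)|$ strongly enough that, combined with the fact that each counted $K_{4}$ yields at least $5!$ rainbow colourings in $L$, one obtains more rainbow copies of $K_{4}$ in $L$ than Claim~\ref{cl:copiesL} permits.

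The minimum-degree bound $\delta(G_{p})\geq (2/3-O(\xi^{2}))n_{p}$ forces $|U_{l}|\in[(1/3-O(\xi))n_{p},(1/3+O(\xi))n_{p}]$, so $|U_{j}||U_{k}|\geq(1-O(\xi))n_{p}^{2}/9$; moreover, since $E_{1}$-edges lie inside parts, $\max_{u}\deg^{E_{1}}(u)\leq|U_{l}|-1\leq n/3+O(\xi n)$. For each $uv$ the ideal contribution to $|\mathcal{K}(uv)|$ is $|U_{j}||U_{k}|$; deducted pairs are those for which one of the five required edges is either (i) missing from $G_{p}$ or (ii) in $G_{p}$ with list of size in $[2,5]$. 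Comparing $e(G_{p})\geq(1/3-O(\xi^{2}))n_{p}^{2}$ against the tripartite maximum $\sum_{l<l'}|U_{l}||U_{l'}|\leq n_{p}^{2}/3$, one sees that the total number of between-part edges missing from $G_{p}$ is $O(\xi n^{2})$; using $\sum_{u}\deg^{E_{1}}(u)=2|E_{1}|$, the refined bound $\max\deg^{E_{1}}(u)\leq n/3+O(\xi n)$, and the hypothesis $m_{2}+\dots+m_{5}\leq 3\xi n^{2}$, a standard incidence count bounds both deductions by $O(\xi n^{4})$, with an absolute constant that is comfortably smaller than the leading term $|E_{1}|\cdot n_{p}^{2}/9\geq 20\,e^{6}\xi n^{4}$. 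Hence
\[
\sum_{uv\in E_{1}}|\mathcal{K}(uv)|\;\geq\;|E_{1}|\cdot\frac{n_{p}^{2}}{9}-O(\xi n^{4})\;\geq\;10\,e^{6}\xi n^{4}.
\]

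Each $K_{4}\in\mathcal{K}(uv)$ contains exactly one $E_{1}$-edge (namely $uv$), with the remaining five edges having lists of size at least $6$; colouring $uv$ with its forced colour and then choosing the five other colours greedily from their lists, always avoiding previously used colours, yields at least $5\cdot 4\cdot 3\cdot 2\cdot 1=5!$ distinct rainbow copies in $L$ per such $K_{4}$. Since every $K_{4}$ in $\mathcal{K}(uv)$ contains $uv$ as its only $E_{1}$-edge, no $K_{4}$ is counted by two different families $\mathcal{K}(uv)$. Consequently $L$ contains at least $5!\cdot 10\,e^{6}\xi n^{4}=1200\,e^{6}\xi n^{4}$ rainbow copies of $K_{4}$, which for $n$ sufficiently large exceeds the bound $r^{10}n^{-1/3}\binom{n}{4}$ of Claim~\ref{cl:copiesL}---a contradiction.

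The delicate point will be the deduction estimate: the crude bound $\max\deg^{E_{1}}(u)\leq n$ would be insufficient, since a single missing edge incident to a high-$E_{1}$-degree vertex could spawn too many bad pairs. The use of the stronger bound $\max\deg^{E_{1}}(u)\leq n/3+O(\xi n)$, which is free because $E_{1}$-edges lie within a single part, is exactly what makes the constant $180\,e^{6}$ in the hypothesis large enough for the argument to close.
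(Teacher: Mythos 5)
Your approach is a genuine alternative to the paper's: instead of invoking supersaturation, you directly lower-bound $\sum_{uv\in E_1}|\mathcal{K}(uv)|$ by an inclusion--exclusion over missing/small-list edges and convert each surviving $K_4$ into rainbow copies in $L$. The paper instead builds one auxiliary graph $\widehat G\cup B_i$, where $\widehat G$ keeps only the between-part edges of $G_p$ with $|L(e)|\geq 6$ and $B_i$ is the largest bipartite subgraph of $E_1(U_i)$ for a suitable $i$ (so $|B_i|\geq |E_1|/6$), checks that this graph has density above the $K_4$-threshold, and applies Theorem~\ref{theorem:supersaturation}. Because $\widehat G$ is tripartite and $B_i$ lives inside one part and is bipartite, every $K_4$ produced automatically has exactly one $B_i$-edge and five $\widehat G$-edges, so the bookkeeping you do by hand is absorbed into one application of supersaturation; that is the main thing the paper's route buys.

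Two caveats about your version. First, the assertion that the minimum degree $\delta(G_p)\geq (2/3-O(\xi^2))n_p$ alone forces $|U_l|=(1/3\pm O(\xi))n_p$ is an overstatement; you also need $\sum_l e_{G_p}(U_l)<25e^6\xi n^2$, since high minimum degree by itself says nothing about the partition (e.g.\ $K_{n_p}$). The conclusion is still true once both facts are used. Second, the deduction estimate is the crux and you leave it as ``a standard incidence count''; the constants are not obviously comfortable. Tracking them: the number $|M|$ of between-part non-edges of $G_p$ together with small-list edges is at most roughly $(25e^6+3)\xi n^2$, and the dominant deduction term is about $\tfrac{2}{9}n^2|M|\approx 6e^6\xi n^4$ against a leading term of at least $\approx 19e^6\xi n^4$, so the argument does close, but the margin is a factor of $3$, not a trivial one, and as you note it requires the $\max_u\deg^{E_1}(u)\lesssim n/3$ bound; the crude bound $\leq n$ would give a deduction of $\approx 18e^6\xi n^4$ and fail. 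These details should be written out to make the proof complete.
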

\begin{proof}
Let $\widehat G\subset G_p$ be the subgraph of $G_p$ obtained by removing  all edges in $U_1, U_2,U_3$ and all edges $e\in G_p$ with $|L(e)|\leq 5$.
Then \[e(\widehat G)> e(G_p)- 25e^6\xi n^2-3\xi n^2\geq \frac23\binom{n_p}2 - 26e^6\xi n^2.\]   
Let $B_i$ be the largest bipartite subgraph of $E_1(U_i)$. Then for some 
 $i\in[3]$ we have $|B_i|\geq |E_1|/6$ and  we add
it to $\widehat G$. 
If $|E_1|> 180 e^6\xi n^2$ then $\widehat G \cup B_i$ contains at least $\frac23\binom{n_p}2+2e^6\xi n^2$ edges 
and by supersaturation, Theorem~\ref{theorem:supersaturation},
we would obtain at least $\delta_{\ref{theorem:supersaturation}}n^4>r^{10}n^{-1/3}\binom n4$ copies of $K_4$ in $\widehat G \cup B_i$. However, each of these copies 
has five edges in $\widehat G$ and one edge in $B_i$ hence it corresponds to a rainbow copy of $K_4$ in $L$. This yields a contradiction 
to the number of rainbow copies of $K_4$ in $L$ given in~Claim~\ref{cl:copiesL}. %\eqref{it:container2} of Theorem~\ref{thm_container}.
\end{proof}
Finally, since $G_p$ is obtained from $G$ by removing the edges in $E_1$ and then removing a vertex set $X$ of size $|X|<2\xi n$
the tripartition $U_1\cup U_2\cup (U_3\cup X)$ satisfies
\begin{align*}e_G(U_1)+e_G(U_1)+e_G(U_3\cup X)&<e_{G_p}(U_1)+e_{G_p}(U_2)+e_{G_p}(U_3)+|E_1|+|X||U_3|\\
&<25 e^6\cdot\xi n^2+ 180e^6\xi n^2+2\xi n^2<\delta n^2.
\end{align*}
as required.
\end{proof}

\end{document}